\newtheorem{lemma}{\bf Lemma}[section]
\newtheorem{theorem}{\bf Theorem}[section]
\newtheorem{proposition}{\bf Proposition}[section]
\newtheorem{remark}{\bf Remark}[section]
\numberwithin{equation}{section}
\newcommand{\be}{\begin{equation}}
	\newcommand{\ce}{\end{equation}}
\newcommand\bes{\begin{eqnarray}}
	\newcommand\ees{\end{eqnarray}}
\newcommand\bess{\begin{eqnarray*}}
	\newcommand\eess{\end{eqnarray*}}
\begin{document}
\title{{\Large Nonlinear stability of shock profiles to Burgers' equation\\ with critical fast diffusion and singularity}
	\footnotetext{\small
		*Corresponding author.}
	\footnotetext{\small E-mail addresses: lixw928@nenu.edu.cn (X. Li),\ \ lijy645@nenu.edu.cn (J. Li),\\ ming.mei@mcgill.ca (M. Mei), \ \ jean-christophe.nave@mcgill.ca (J.-C. Nave).} }

\author{{Xiaowen Li$^{1,3}$, Jingyu Li$^1$, Ming Mei$^{2,3}$$^\ast$ and Jean-Christophe Nave$^3$}\\[2mm]
	\small\it $^1$School of Mathematics and Statistics, Northeast Normal University,\\
	\small\it   Changchun, 130024, P.R.China \\
	\small\it $^2$Department of Mathematics, Champlain College Saint-Lambert,\\
	\small\it     Saint-Lambert, Quebec, J4P 3P2, Canada\\
	\small\it $^3$Department of Mathematics and Statistics, McGill University,\\
	\small\it     Montreal, Quebec, H3A 2K6, Canada  }

\date{}

\maketitle

\begin{quote}
	\small \textbf{Abstract}:
	In this paper we propose the first framework to study Burgers' equation featuring critical fast diffusion in form of $u_t+f(u)_x = (\ln u)_{xx}$. The solution possesses a strong singularity when $u=0$ hence bringing technical challenges. The main purpose of this paper is to investigate the asymptotic stability of viscous shocks, particularly those with shock profiles vanishing at the far field $x=+\infty$. To overcome the singularity, we introduce some weight functions and show the nonlinear stability of shock profiles through the weighted energy method. Numerical simulations are also carried out in different cases of fast diffusion with singularity, which illustrate and confirm our theoretical results.
	
	\indent \textbf{Keywords}: Nonlinear stability; shock profiles; Burgers' equation; singularity; weighted energy estimates.
	
	\indent \textbf{AMS (2010) Subject Classification}: 35B35, 35L65, 76N10

\end{quote}

%%%%%%  1. Introduction  %%%%%%

\section{Introduction}
Burgers equation is a nonlinear partial differential equation deeply rooted in fluid mechanics. It has appeared in a multitude of physical phenomena such as jet flow \cite{jet flow}, molecular interface growth \cite{growing interfaces} and traffic flow \cite{traffic flow}, etc. With the growing diversity of physical phenomena, various variants of the Burgers equation have emerged. Among them, a notable class with nonlinear diffusion, can be expressed in porous media flow:
\begin{equation}\label{porous media equ}
	u_t+f(u)_x=\left(|u|^{m-1}u \right)_{xx},
\end{equation}
where $u$ denotes the fluid velocity and $f$ is a sufficiently smooth function. $m>0$ is a parameter, which
determines whether the fluid is slow diffusion when $m>1$ or fast diffusion when $0<m<1$. If $u\geq0$, then \eqref{porous media equ} can be written by
\begin{equation}\label{burgers equ}
	u_t+f(u)_x=\left(u^{m} \right)_{xx}.
\end{equation}

When $m=1$, the equation \eqref{burgers equ} serves as a prototype for various variants of the Burgers equation and was initially considered by Burgers \cite{burgers1,burgers2} to investigate the turbulence phenomena arising from the interaction between convection and diffusion in
fluids, generating a wealth of intriguing stability results in the field. Historically, the first result on the stability of shock profiles for the Cauchy problem of \eqref{burgers equ} was due to
Ili'in and Oleinik \cite{Ili'in}. They utilized the maximum principle to prove that the solution of \eqref{burgers equ} asymptotically converges to the solution of the Riemann problem for the inviscid Burgers equation as $t\rightarrow+\infty$. Subsequently, Sattinger \cite{spectural} showed the stability based on spectral analysis. Later, Kawashima and Matsumura \cite{same line} and  Nishihara \cite{Nishihara} proved the stability as well as convergence rates by $L^2$-energy method. When the flux $f(u)$ is nonconvex, the stability was investigated by Kawashima and Matsumura \cite{nonconvex} including the system
case, and the convergence rates  by Mei \cite{M. Mei} and Matsumura and Nishihara \cite{conventional energy method}, respectively. Furthermore, Kim \cite{Kim} considered  viscous shocks and $N$-waves of classical Burgers' equation ($f(u)=u^2/2$), and the stability of viscous shock profiles. The metastability of $N$-waves have been investigated in \cite{Beck, McQuighan} using linear stability analysis and dynamical systems methods. See also the significant contributions by
Weinberger \cite{nonconvex3}, Jones \textit{et al.} \cite{Jones}, Freist\"uhler and Serre \cite{Serre}, Howard \cite{Howard-1, Howard-2,Howard-3}, Engler \cite{Engler}, and the references therein.

Regarding the system of conservation laws, the stability of shock profiles was first independently contributed by Matsumura and Nishihara \cite{Matsumura-Nishihara-1} and Goodman \cite{Goodman}, then significantly developed by
Liu \cite{Liu}, Szepessy and Xin \cite{Szepessy}, Fries \cite{Fries}, Matsumura and Mei \cite{Matsumura-Mei,Matsumura-Mei2}, Mei and Nishihara \cite{Mei-Nishihara}, Mascia and Zumbrun \cite{Mascia-Zumbrun}, Liu and Zeng \cite{Liu-Zeng-1, Liu-Zeng-2}, Kang, Vasseur and Wang \cite{Kang-Vasseur-Wang}, and the references therein.

% Moreover, Harabetian \cite{rarefaction} was the first to study the convergence rate of rarefaction wave solutions for the Cauchy problem \eqref{burgers equ}.
% Subsequently, extensive research has been conducted in this area, as evidenced by \cite{rarefaction1, rarefaction2} and references therein,
% with the multidimensional space discussed in \cite{rarefaction3, rarefaction4}.

% Recently, Shargatov \cite{Shargatov, Shargatov2021} and Yoshida \cite{Yoshida} showed the global asymptotic stability of traveling wave
% solutions for KdVB equation and rarefaction waves for the generalized RKdVB equation, respectively.

Compared with the large number of results available for scalar viscous conservation laws \eqref{burgers equ} with $m=1$, a fundamental question such as that of the global well-posedness of \eqref{burgers equ} with $0<m<1$ or $m>1$ still remain poorly understood, and specifically for the critical fast diffusion case $(\frac{u_x}{u})_x=(\ln u)_{xx}$. The primary obstacle here is that the occurrence of singularity for  $(u^m)_{xx}=m(\frac{u_x}{u^{1-m}})_x$  as $0<m<1$ and for $(\ln u)_{xx}$ when $u=0$, and the formation of  solutions with sharp corners caused by the degeneracy for  $m>1$, respectively. These pose inherent technical difficulties preventing many classical results and conclusions. Furthermore, the nonlinear viscous term $(u^m)_{xx}$ with $m\neq1$ of equation \eqref{burgers equ}, while better aligned with the physical context, introduces complications in developing estimates compared to the linear viscous term $u_{xx}$.

In this paper, we are mainly interested in the following Burgers' equation with the critical fast diffusion:
\begin{equation}\label{original model}
	\begin{cases}
		u_t-(\ln u)_{xx}+f(u)_x=0,\quad x\in \mathbb{R},~t>0,\\
		u(x,0)=u_0(x),\quad x\in \mathbb{R},
	\end{cases}
\end{equation}
where $f$ under consideration is suitably smooth, and the initial data $u_0(x)\geq 0$ satisfies
\begin{equation}\label{original initial data1}
	u_0(x)\rightarrow u_\pm  \text{ as } x\rightarrow\pm \infty\text{ with }0=u_+<u_-.
\end{equation}

The main task of the paper is to show the asymptotic stability of viscous shocks with singular state $u_+=0$. Namely, we shall prove that:
\begin{enumerate}[(i)]
	\item the Cauchy problem \eqref{original model}-\eqref{original initial data1} admits a unique monotone shock profile $U(x-st)$ connecting $0$ and $u_-$, which satisfies
	$U_z(z)<0$ for all $z\in \mathbb{R}$ (see Theorem \ref{Existence of the shock profile});
	\item the unique shock profile $U$ obtained above is asymptotically stable. Actually, we show that if the initial value $u_0$ is a small perturbation of
	the shock profile $U$ in some precise topological sense, then the solution of \eqref{original model}-\eqref{original initial data1} will
	converge to $U$ pointwise as time tends to infinity (see Theorem \ref{stability theorem}).
\end{enumerate}

As far as the authors know, this is the first result on the global well-posedness and asymptotic dynamics of the system \eqref{original model}-\eqref{original initial data1} for fast diffusion. In contrast to the previous studies, our work takes into account two aspects. On one hand, we consider the critical case $(u_x/u)_x=(\ln u)_{xx}$ of fast diffusion, that is $m=0$ of $(\frac{u_x}{u^{1-m}})_x$. While on the other hand, we address the situation where the velocity approaches 0 as $z\rightarrow+\infty$, and successfully resolve the challenges brought by the singular nature of the problem. Below we shall briefly present the strategies used to solve our problem.

Given the particular structure of  \eqref{original model}, we can deduce that a shock profile $U$, if it exists, must meet both the Rankine-Hugoniot (R-H) condition and the generalized shock condition. On the contrary, the result (i) can be obtained by a first order differential equation (ODE) satisfied by $U$, employing the R-H condition and shock condition together with the inverse function theorem requiring only a moderate level of analytical effort. For the asymptotic stability of $U$ stated in (ii), our primary focus lies in handling the singularity $(u_x/u)_x$ afore-mentioned, which is resolved by employing the technique of taking anti-derivatives and the method of weighted energy estimates. However, all   weight functions were carefully choosen to also possess the singularities, which results in the appearance of non-zero boundary terms in the $H^1$ and $H^2$ estimates when performing integration by parts. To overcome this barrier, we introduce an approximate weight function with a parameter $\epsilon>0$ to remove the part of singularity and establish the uniform estimates that are independent of $\epsilon$. Finally, by employing Fatou's Lemma, we derive the desired estimates. This is the outline of our approach, and the precise procedures will be presented in Section 4 below.

Although we consider the system \eqref{burgers equ} for the critical fast diffusion, the ideas developed in this paper may be applicable to the normal fast diffusion ($0<m<1$) \cite{Xu-Mei-Qin-Sheng}. However, one has to face new difficulties for the stability of shock profiles, as the parameter $m$ emerges, even in the presence of reduced singularity. In order to attain a desired estimate, it may be necessary to impose further constraints on the smooth function $f$ and carefully select an appropriate weight function. The approach based on the selection of weight functions is flexible and is expected to be useful for the study of $m>1$.

Before concluding this section, we mention some other works comparable to the current work. Regarding problem \eqref{burgers equ} with $m=1$ with the boundary condition, namely, the initial-boundary value problem for the scalar viscous conservation laws, the asymptotic behavior of the solutions was first discussed by Liu and Yu \cite{IBVP1}. Then, it was also discussed in \cite{IBVP2, IBVP3}  on the half-line and bounded interval, respectively. In \cite{Hashimoto} Hashimoto is concerned with stability of the stationary solution of  Burgers' equation in exterior domains on multidimensional spaces. When $f(u)_x$ of \eqref{burgers equ} is replaced by $u^p$, Galaktionov \cite{Galaktionv} proved that the critical blow-up index is $m+2$ with slow diffusion, that is, the solution of the porous media equation blows up in finite time for any non-negative and non-trivial initial value when $1<p<m+2$. Subsequently, Qi \cite{Qi} and Mochizuki \cite{Mochizuki} extended the above results to the fast diffusion and the critical extinction index is $m$ pointed out by Li \cite{Y. Li}. In comparison to solutions blow up, the extinction refers to the phenomenon where the solution becomes zero after a finite time. Furthermore, there have also been studies on the blow-up \cite{Galaktionv2, Qi2} and extinction properties \cite{Tian, Yin1} of solutions for the Cauchy-problem to the $p$-Laplace equation, which were later expanded to more general non-Newtonian polytropic filtration equations \cite{Jin, Yin2}.

The rest of this paper is arranged as follows. In Section 2, we state our main results on the existence and nonlinear stability of shock wave solutions to the viscous conservation laws \eqref{original model}. In Section 3,
the existence of shock profiles will be discussed. In Section 4, we show the details of weighted energy estimates and prove the nonlinear stability results with $u_+=0$.
In Section 5, we carry out some numerical simulations in different cases for the fast diffusion with singularity to illustrate and confirm the viscous shock waves behavior.

\textbf{Notations}

Before proceeding, we introduce some notations for convenience. In this paper, the symbol $C$ represents a generic positive constant that may vary from one line to another. The integrals $\int_{\mathbb{R}}f(x)\mathrm{d}x$ and $\int_{0}^{t}\int_{\mathbb{R}}f(x,\tau)\mathrm{d}x\mathrm{d}\tau$ will be abbreviated as $\int f(x)$ and $\int_{0}^{t}\int f(x,\tau)$, respectively, for convenience, if no confusion arises. $H^{k}(\mathbb{R})$ is the usual $k$-th order Sobolev space defined on $\mathbb{R}$ with norm $\|f\|_{H^k(\mathbb{R})}:=\left(\sum_{j=0}^k\|\partial_x^j f\|_{L^2(\mathbb{R})}^2\right)^{1 / 2}$. $H^{k}_{w}(\mathbb{R})$ denotes the weighted space of measurable functions $f$ such that $\sqrt{w} \partial_x^j f \in L^2$ for $0\leq j\leq k$ with norm $\|f\|_{H_w^k(\mathbb{R})}:=\left(\sum_{j=0}^k \int w(x)|\partial_x^j f|^2 d x\right)^{1 / 2}$. For simplicity, we denote $\|\cdot\|:=\|\cdot\|_{L^2(\mathbb{R})}$, $\|\cdot\|_w:=\|\cdot\|_{L_w^2(\mathbb{R})}$, $\|\cdot\|_k:=\|\cdot\|_{H^k(\mathbb{R})}$ and $\|\cdot\|_{k, w}:=\|\cdot\|_{H_w^k(\mathbb{R})}$.

\section{Preliminaries and main results}		
Let $u_\pm$ be the state constants such that $0=u_+<u_-$, and $s$ be the velocity of shocks. The shock profile of \eqref{original model} with \eqref{original initial data1} is a non-constant smooth solution $u(x,t)=U(x-st)$ satisfying
\begin{equation}\label{shock profile equ}
	\begin{cases}
		-sU_{z}-\left(\frac{U_{z}}{U}\right)_{z}+f(U)_{z}=0,\quad z\in \mathbb{R}, \\
		U(+\infty)=0,~ U(-\infty)=u_{-},
	\end{cases}
\end{equation}
with $z=x-st$, which connects $0$ and $u_-$.

We first state the sufficient and necessary conditions for the existence of the shock profile $U(x-st)$ to system \eqref{original model} as follows.
	
\begin{theorem}[Existence of shock profiles] \label{Existence of the shock profile}
Let
\begin{equation}\label{g1}
	g(u)\triangleq f(u)-f(u_{\pm})-s(u-u_{ \pm}).
\end{equation}
Suppose $f\in C^{\max\{k_{\pm}+1, 3\}}(\mathbb{R})$, $k_{\pm}\geq 0$, and $g'(u_\pm)=\cdots=g^{(k_{\pm})}(u_\pm)=0$ with $g^{(k_{\pm}+1)}(u_\pm)\neq0$. \begin{enumerate}[(i)]
\item If \eqref{original model} admits a shock profile $U(x-st)$ connecting $0$ and $u_-$, then $u_-$ and $s$ must satisfy the Rankine-Hugoniot condition
\begin{equation}\label{R-H}
	 s=\frac{f(0)-f(u_-)}{0-u_-},
\end{equation}
and the generalized shock condition
\begin{equation}\label{Lax's}
	 g(u)<0, \quad\text{for } u\in(0,u_-).
\end{equation}
\item Conversely, suppose that \eqref{R-H} and \eqref{Lax's} hold. Then there exists a shock profile $U(x-st)$ of \eqref{original model} connecting $0$ and $u_-$. The shock profile $U(z)$ is unique up to a shift in $z$ and satisfies
\begin{equation}\label{U_z<0}
	 U_z(z)<0,\quad \forall z\in \mathbb{R}.
\end{equation}
Moreover, it holds as $z\rightarrow\pm\infty$,
\begin{equation}\label{f''>0}
	 \left|U(z)-0\right|\sim|z|^{-1},~\left|U(z)-u_-\right|\sim\mathrm{e}^{-\lambda_-|z|},\quad\text{if }f'(0)<s<f'(u_-),
\end{equation}
\begin{equation}\label{f'(u_+)=s}
\left|U(z)-0\right|\sim|z|^{-\frac{1}{1+k_{+}}},	 \quad\text{if }f'(0)=s,
\end{equation}
and
\begin{equation}\label{f'(u_-)=s}
	\left|U(z)-u_-\right|\sim|z|^{-\frac{1}{k_{-}}},	 \quad\text{if }f'(u_-)=s,
\end{equation}
with $\lambda_-=u_{-}(f'(u_-)-s)$.
\end{enumerate}
\end{theorem}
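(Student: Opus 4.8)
The plan is to reduce the second-order profile problem \eqref{shock profile equ} to an autonomous first-order ODE and then study its orbits on $[0,u_-]$. Since \eqref{shock profile equ} is already in divergence form, one integration in $z$ gives $-sU-\frac{U_z}{U}+f(U)=C_0$ for a constant $C_0$, that is $U_z=U\bigl(f(U)-sU-C_0\bigr)$. For part (i): letting $z\to-\infty$, where $U\to u_-$ and $U_z/U\to0$ (a shock profile and its derivatives decay at the far fields), forces $C_0=f(u_-)-su_-$; letting $z\to+\infty$, where $U\to0$ and $U_z/U\to0$, forces $C_0=f(0)$. Equating these is exactly \eqref{R-H}; once \eqref{R-H} holds, $C_0=f(u_\pm)-su_\pm$ is unambiguous, $g$ in \eqref{g1} is well defined, and the reduced equation reads $U_z=Ug(U)$. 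A profile joining $u_-$ at $-\infty$ to $0$ at $+\infty$ is a non-constant solution of this ODE whose right side is locally Lipschitz, so by uniqueness it never meets a zero of $U\mapsto Ug(U)$; hence $U_z$ never vanishes and, since $U(-\infty)=u_->0=U(+\infty)$, $U$ is strictly decreasing with range $(0,u_-)$ and $U_z<0$. Then $g(U(z))=U_z(z)/U(z)<0$ for every $z$, and since $U$ takes every value in $(0,u_-)$ this is \eqref{Lax's}.

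For part (ii), assume \eqref{R-H} and \eqref{Lax's}. Then $g(0)=g(u_-)=0$ (using \eqref{R-H}) while $Ug(U)<0$ on $(0,u_-)$, so $0$ and $u_-$ are the only equilibria of $U_z=Ug(U)$ in $[0,u_-]$. Fix $u_\ast\in(0,u_-)$ and put $z(U)=\int_{u_\ast}^{U}\frac{d\xi}{\xi g(\xi)}$, which is smooth and strictly decreasing on $(0,u_-)$ because $z'(U)=1/(Ug(U))<0$. Since $Ug(U)$ is $C^1$ and vanishes at $0$ and $u_-$, no nontrivial orbit reaches either equilibrium in finite $z$, which forces $z(U)\to+\infty$ as $U\to0^+$ and $z(U)\to-\infty$ as $U\to u_-^-$; hence $z(\cdot)$ maps $(0,u_-)$ bijectively onto $\mathbb{R}$. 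By the inverse function theorem its inverse $U(\cdot):\mathbb{R}\to(0,u_-)$ is smooth, strictly decreasing, with $U(-\infty)=u_-$, $U(+\infty)=0$ and $U_z=Ug(U)<0$; differentiating $U_z/U=g(U)$ gives $(U_z/U)_z=(f'(U)-s)U_z=f(U)_z-sU_z$, so $U$ solves \eqref{shock profile equ}. Uniqueness up to translation is immediate from autonomy of the equation.

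It remains to read off the rates from the local behavior of $g$, using $f\in C^{\max\{k_\pm+1,3\}}$ to Taylor expand. Near $u_+=0$: if $f'(0)<s$ then $g'(0)=f'(0)-s<0$, $U_z\sim g'(0)U^2$, and $\frac{d}{dz}(1/U)=-g(U)/U\to s-f'(0)>0$, so $U\sim|z|^{-1}$; if $f'(0)=s$ then $k_+\ge1$ and $g(U)\sim\frac{g^{(k_++1)}(0)}{(k_++1)!}U^{k_++1}$ with $g^{(k_++1)}(0)<0$ by \eqref{Lax's}, so $\frac{d}{dz}\bigl(U^{-(k_++1)}\bigr)$ tends to a positive constant and $U\sim|z|^{-1/(1+k_+)}$. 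Near $u_-$: if $f'(u_-)>s$ then $U_z\sim\lambda_-(U-u_-)$ with $\lambda_-=u_-(f'(u_-)-s)>0$, so $|U-u_-|\sim e^{-\lambda_-|z|}$; if $f'(u_-)=s$ then, with $V=u_--U$, one gets $V_z\sim\kappa V^{k_-+1}$ for a $\kappa>0$ whose positivity comes from \eqref{Lax's}, whence $\frac{d}{dz}(V^{-k_-})\to-k_-\kappa<0$ and $V\sim|z|^{-1/k_-}$. Since \eqref{Lax's} forbids $g$ from being positive just inside either endpoint, these cases are exhaustive at each far field, and assembling the two ends yields \eqref{f''>0}--\eqref{f'(u_-)=s}.

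The hardest parts will be: (a) justifying in part (i) that $U_z/U\to0$ at the singular end $z=+\infty$ -- after the reduction the logarithmic singularity is gone, but $U_z/U$ is a $0/0$ limit and one must rule out a nonzero limit (equivalently, argue that the profile relevant to the Cauchy problem does satisfy \eqref{R-H}), which is exactly where the precise meaning of ``shock profile'' is needed; and (b) pinning down the exponents in the non-hyperbolic cases $f'(u_\pm)=s$, where the equilibrium is degenerate: one must integrate the leading-order nonlinear ODE and carry the asymptotics through the inverse function, tracking the extra factor $U$ in $Ug(U)$ at the vanishing state -- this is what turns the rate $|z|^{-1/k_-}$ at $u_-$ into $|z|^{-1/(1+k_+)}$ at $u_+$.
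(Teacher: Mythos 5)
Your proposal is correct and follows essentially the same route as the paper: integrate the profile equation once to obtain $U_z=Ug(U)$, pin down the constant (and hence the Rankine--Hugoniot condition) from the far-field limits, invert $z(U)=\int \mathrm{d}\xi/(\xi g(\xi))$ after checking divergence at both endpoints, and read off the decay rates from the leading-order behavior of $g$ near $u_\pm$. The only cosmetic difference is that you justify the endpoint divergence via Lipschitz non-reachability of equilibria while the paper uses the explicit local expansion $|g(U)|\sim|U-u_\pm|^{1+k_\pm}$; both arguments, including the formal assumption $U_z/U\to 0$ at $z=+\infty$ in part (i), match the paper's.
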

\begin{remark}
	 It is noted that the generalized shock condition \eqref{Lax's} implies the degenerate Lax's entropy condition
	 \begin{equation}\label{degenerate lax's}
	 	 f'(0)\leq s \leq f'(u_-).
	 \end{equation}
In fact, when we fix $u_+$ of \eqref{g1}, we have
\begin{equation}\nonumber
	\begin{aligned}
		g(U)&=f(U)-f(0)-s(U-0)\\&=\left(\frac{f(U)-f(0)}{U-0} -s\right)(U-0),		
	\end{aligned}
\end{equation}
which, in combination with $g(U)<0$ and $U>0$ for $U\in(0,u_-)$, yields
\begin{equation}\label{equ2}
	\frac{f(U)-f(0)}{U-0} -s<0.
\end{equation}
Taking the limit of \eqref{equ2} as $U$ tends to $0$, we then arrive at
\begin{equation}\label{equ3}
	f'(0)\leq s.
\end{equation}
Applying the same procedure for $u_-$ in \eqref{g1} and combining with \eqref{equ3}, one obtains \eqref{degenerate lax's}.
\end{remark}
	
The degenerate Lax's entropy condition \eqref{degenerate lax's} includes the following three cases: the nondegenerate shock condition
\begin{equation}\label{nondegenerate lax's}
	f'(0)<s<f'(u_-),
\end{equation}
the degenerate shock condition
\begin{equation}\label{degenerate lax's 1}
	f'(0)=s<f'(u_-),
\end{equation}
and
\begin{equation}\label{degenerate lax's 2}
	f'(0)<s=f'(u_-).
\end{equation}
We therefore categorize our discussion on stability into three different situations.
	
Given a shock profile $U(z)$, we expect $U(z+x_0)$ to be the asymptotic profile of the original solution $u(x,t)$ to the Cauchy problem \eqref{original model} with given initial data $u_0(x)$. We may
determine the shift $x_0$ from the perturbed equation around $U(x-st+x_0)$:
\begin{equation}\nonumber
\left(u-U\right)_t+\left(f(u)-f(U) \right)_x-\left(\frac{u_x}{u}-\frac{U_x}{U}\right)_x=0.	
\end{equation}
Integrating the above equation with respect to $x$ over $(-\infty,+\infty)$, we formally have
\begin{equation}\nonumber
	 \frac{\mathrm{d}}{\mathrm{d}t}\int _{-\infty}^{+\infty} \left(u(x,t)-U(x-st+x_0)\right)\mathrm{d} x=0,
\end{equation}
which implies
\begin{equation}\label{anti-derivative}
\int _{-\infty}^{+\infty}\left(u(x,t)-U(x-st+x_0)\right)d x=\int_{-\infty}^{+\infty} \left(u_0(x)-U(x+x_0)\right)\mathrm{d} x.	
\end{equation}
In order to set the work space in $H^2(\mathbb{R})$ for the anti-derivative of $u-U$, we thus heuristically expect
\begin{equation}\label{anti-derivative2}
\int_{-\infty}^{+\infty} \left(u_0(x)-U(x+x_0)\right)\mathrm{d} x=0.	
\end{equation}
Namely,
\begin{equation}\nonumber
	\begin{aligned}
		0&=\int_{-\infty}^{+\infty}\left(u_0(x)-U\left(x+x_0\right) \right)\mathrm{d} x \\
		& =\int_{-\infty}^{+\infty}\left(u_0(x)-U(x)\right) \mathrm{d} x-\int_{-\infty}^{+\infty}\left(U\left(x+x_0\right)-U(x)\right)\mathrm{d} x\\
		&=\int_{-\infty}^{+\infty}\left(u_0(x)-U(x)\right) \mathrm{d} x-\int_{-\infty}^{+\infty}\int_{0}^{x_0} U'(x+\eta)\mathrm{d}\eta\mathrm{d} x\\
		&=\int_{-\infty}^{+\infty}\left(u_0(x)-U(x)\right) \mathrm{d} x-\int_{0}^{x_0}\int_{-\infty}^{+\infty} U'(x+\eta)\mathrm{d} x\mathrm{d}\eta\\
		&=\int_{-\infty}^{+\infty}\left(u_0(x)-U(x)\right) \mathrm{d} x-\int_{0}^{x_0}\left(U(+\infty)-U(-\infty) \right) \mathrm{d}\eta\\
		&=\int_{-\infty}^{+\infty}\left(u_0(x)-U(x)\right) \mathrm{d} x-\int_{0}^{x_0}\left(0-u_-\right) \mathrm{d}\eta\\
		& =\int_{-\infty}^{+\infty}\left( u_0(x)-U(x)\right) \mathrm{d} x+x_0u_{-}.
	\end{aligned}
\end{equation}
This gives
\begin{equation}\label{shift}
	 x_0:=-\frac{1}{u_-}\int_{-\infty}^{+\infty}\left(u_0(x)-U(x)\right) \mathrm{d}x.
\end{equation}
Let us define
\begin{equation}\label{phi0}
	\phi_0(x)=\int_{-\infty}^{x}\left(u_0(y)-U(y+x_0) \right)\mathrm{d}y,
\end{equation}
and thus $\phi_0(+\infty)=0$. Our stability of shock profile $U(z+x_0)$ with singularity at $u_+=0$ is stated as follows.	
\begin{theorem}[Stability of shock profiles] \label{stability theorem}
Let $U(z)$ be a shock profile obtained in Theorem \ref{Existence of the shock profile}. Assume that $u_0(x)-U(x)\in L^1(\mathbb{R})$. Define $x_0$ and $\phi_0(x)$ by \eqref{shift} and \eqref{phi0}. Then it holds:
\begin{enumerate}[(i)]
\item When $f'(0)<s<f'(u_-)$, then there exists a positive constant $\epsilon_{1}$ such that if $\left\|\phi_0\right\|+\left\|u_0-U\right\|_{1, w_1} \leq \epsilon_1$ with
\begin{equation}
	 w_1(U)=U^{-2}\sim\left\{\begin{array}{cc}
	 	 1+z^2,& z > 0, \\
	 	1, & z\leq0,
	 \end{array}\right.
\end{equation}
the Cauchy problem \eqref{original model}-\eqref{original initial data1} has a unique global solution $u(x,t)$ satisfying
\begin{equation}\nonumber
	u-U \in C\left([0, \infty) ; H_{w_1}^1\right) \cap L^2\left((0, \infty) ; L_{w_2}^2\right),\quad (u-U)_x \in L^2\left((0, \infty) ; H_{w_3}^1\right),
\end{equation}
where $U=U(x-st+x_0)$ is the shifted shock profile, and the weight functions $w_i(U)$, $i=2, 3$, are defined by
\begin{equation}\label{weight function}
 w_2(U)=U^{-1}\sim\left\{\begin{array}{cc}
 	\sqrt{1+z^2},& z > 0, \\
 	1, & z\leq0,
 \end{array}\right.\quad w_3(U)=U^{-3}\sim \left\{\begin{array}{cc}
 1+z^3,& z > 0, \\
 1, & z\leq0,
\end{array}\right.	
\end{equation}
and moreover
\begin{equation}
	\sup _{x \in \mathbb{R}}|u(x, t)-U(x-s t+x_0)| \rightarrow 0, \quad \text { as } t \rightarrow+\infty.
\end{equation}
\item When $f'(0)=s<f'(u_-)$, then there exists a positive constant $\epsilon_{2}$ such that if $\left\|\phi_0\right\|_{w_4}+\left\|u_0-U\right\|_{1, w_1} \leq \epsilon_2$ with
\begin{equation}\label{2.22}
	 w_4(U)=\frac{U\left(U-u_-\right)}{g(U)}\sim \left\{\begin{array}{cc}
	 	\left( 1+z^{k_+}\right)^\frac{1}{1+k_+},& z > 0, \\
	 	1, & z\leq0,
	 \end{array}\right.
\end{equation}
the Cauchy problem \eqref{original model}-\eqref{original initial data1} has a unique global solution $u(x,t)$ satisfying
\begin{equation}\nonumber
	u-U \in C\left([0, \infty) ; H_{w_1}^1\right) \cap L^2\left((0, \infty) ; L_{w_5}^2\right),\quad (u-U)_x \in L^2\left((0, \infty) ; H_{w_3}^1\right),
\end{equation}
where the weight function $w_5(U)$ is defined by
\begin{equation}\label{2.23}
	w_5(U)=\frac{\left(U-u_-\right)}{g(U)}\sim \left\{\begin{array}{cc}
		\sqrt{1+z^2},& z > 0, \\
		1, & z\leq0,
	\end{array}\right.
\end{equation}
and moreover
\begin{equation}
	\sup _{x \in \mathbb{R}}|u(x, t)-U(x-s t+x_0)| \rightarrow 0, \quad \text { as } t \rightarrow+\infty.
\end{equation}
\item When $f'(0)<s=f'(u_-)$, then there exists a positive constant $\epsilon_{3}$ such that if $\left\|\phi_0\right\|_{w_4}+\left\|u_0-U\right\|_{1, w_1} \leq \epsilon_3$ with
\begin{equation}\label{2.25}
w_4(U)=\frac{U\left(U-u_-\right)}{g(U)}\sim\left\{\begin{array}{cc}
	\sqrt{1+z^2},& z < 0, \\
	1, & z\geq0,
\end{array}\right.	
\end{equation}
the Cauchy problem \eqref{original model}-\eqref{original initial data1} has a unique global solution $u(x,t)$ satisfying
\begin{equation}\nonumber
	u-U \in C\left([0, \infty) ; H_{w_1}^1\right) \cap L^2\left((0, \infty) ; L_{w_5}^2\right),\quad (u-U)_x \in L^2\left((0, \infty) ; H_{w_3}^1\right),
\end{equation}
where the weight function $w_5(U)$ is defined by
\begin{equation}\label{weight function2}
	w_5(U)=\frac{\left(U-u_-\right)}{g(U)}\sim \sqrt{1+z^2},\quad z\in\mathbb{R}.		
\end{equation}
and moreover
\begin{equation}
	\sup _{x \in \mathbb{R}}|u(x, t)-U(x-s t+x_0)| \rightarrow 0, \quad \text { as } t \rightarrow+\infty.
\end{equation}
\end{enumerate}
\end{theorem}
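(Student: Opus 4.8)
The plan is to pass to the antiderivative of the perturbation, close a system of weighted a priori estimates in the singular spaces prescribed in the statement, and then combine these with local well-posedness and the standard continuation argument; the $L^\infty$-convergence then follows from the space--time integrability the estimates provide. \textbf{Reformulation.} Fix $U=U(x-st+x_0)$ with $x_0$ as in \eqref{shift}, and set $\phi(x,t)=\int_{-\infty}^{x}\bigl(u(y,t)-U(y,t)\bigr)\,dy$, so that $u-U=\phi_x$ and, by \eqref{anti-derivative}--\eqref{anti-derivative2} and the choice of $x_0$, $\phi(\pm\infty,t)=0$. Integrating the perturbed equation once in $x$ and using $u=U+\phi_x$ together with the exact identity $\frac{u_x}{u}-\frac{U_x}{U}=\frac{\phi_{xx}}{u}-\frac{U_x}{uU}\phi_x$ reduces the problem to the scalar equation
\[
\phi_t-\frac{1}{u}\,\phi_{xx}+\Bigl(f'(U)+\frac{U_x}{uU}\Bigr)\phi_x+N(U,\phi_x)=0,\qquad \phi(x,0)=\phi_0(x),
\]
where $N(U,\phi_x)=f(U+\phi_x)-f(U)-f'(U)\phi_x=O(\phi_x^{2})$ and the diffusion coefficient $1/u=1/(U+\phi_x)$ blows up as $x\to+\infty$. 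I also record that integrating the profile equation \eqref{shock profile equ} once yields $U_z=U\,g(U)$, which is what dictates the choice of $w_4$.

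\textbf{Weighted energy estimates.} Under the a priori assumption that $\phi_x=u-U$ is small in $H^1_{w_1}$ (with $w_1=U^{-2}\ge c>0$), the one-dimensional Sobolev inequality applied to $\phi_x/U\in H^1(\mathbb{R})$ gives $|\phi_x|\le\frac12 U$ pointwise, hence $u\ge\frac12U>0$, which legitimizes the parabolic structure. I would then establish three tiers of estimates. \emph{Zeroth order:} multiply the $\phi$-equation by $\phi$ (in the degenerate cases (ii)--(iii), by $w_4\phi$ with $w_4=U(U-u_-)/g(U)$, designed via $U_z=Ug(U)$ so that the convection term produces a genuine damping of $\phi$, as in the classical weighted-energy method for degenerate shocks), obtaining control of $\|\phi(t)\|$ (resp. $\|\phi(t)\|_{w_4}$) and of the dissipation $\int_0^t\|\phi_x\|_{w_2}^2$ (resp. $\int_0^t\|\phi_x\|_{w_5}^2$), since $u^{-1}\sim w_2=U^{-1}$. \emph{First order:} differentiate once and test against $w_1\phi_x$; the good term is $\int w_1u^{-1}\phi_{xx}^2\sim\int w_3\phi_{xx}^2$ (because $w_1U^{-1}=U^{-3}=w_3$), which is exactly why $w_3$ enters. \emph{Second order:} differentiate twice and test against $w_1\phi_{xx}$, supplying $\int_0^t\|\phi_{xx}\|_{1,w_3}^2$ and closing the norm $u-U\in H^1_{w_1}$. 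Summing, all weighted cross terms and the nonlinear terms (which carry the singular factor $1/u=1/(U+\phi_x)$) must be absorbed by the dissipation — this is precisely what forces the matching powers $U^{-1},U^{-2},U^{-3}$ — and one reaches a Gronwall-type inequality $E(t)+D(t)\le C\,E(0)+C\,\bigl(\sup_{[0,t]}E\bigr)^{1/2}D(t)$, with $E$ the weighted $H^2$-norm of $\phi$ and $D$ the accumulated dissipation; a continuity argument then yields the uniform bound $E(t)+D(t)\le C\,E(0)$ when $\|\phi_0\|+\|u_0-U\|_{1,w_1}$ (resp. $\|\phi_0\|_{w_4}+\|u_0-U\|_{1,w_1}$) is sufficiently small.

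\textbf{The main obstacle: singular weights and boundary terms at infinity.} Because $w_1,w_3,w_4,w_5$ are unbounded as $x\to+\infty$ (resp. $x\to-\infty$ in case (iii)), each integration by parts in the first- and second-order estimates leaves boundary terms at infinity that cannot simply be discarded. I would handle this as announced in the introduction: replace each $w_i$ by a regularized weight $w_i^{\varepsilon}$ that agrees with $w_i$ on a large set but stays bounded (built from $U+\varepsilon$ in place of $U$), so that all boundary terms vanish identically; run the whole of the previous step with $w_i^{\varepsilon}$, checking that the extra $\varepsilon$-terms are either sign-favourable or absorbed so that the bounds are \emph{uniform in $\varepsilon$}; finally let $\varepsilon\to0$ and invoke Fatou's lemma (and monotone convergence for the time integrals) to recover the estimates with the genuine weights. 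This $\varepsilon$-regularization, together with the delicate bookkeeping that keeps every weighted term controlled by the dissipation, is the technical heart of the argument.

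\textbf{Global existence and asymptotics.} Local-in-time existence and uniqueness of $u-U\in C([0,\tau];H^1_{w_1})$ follows from a standard iteration scheme for the equation of Step~1, the weight $w_1$ being precisely what tracks the positivity margin $u\ge\frac12U$; combining this with the uniform a priori estimate through the continuation argument produces the global solution in the stated class. For the long-time behaviour, the a priori bound provides $\int_0^\infty\|\phi_x(t)\|_{w_2}^2\,dt<\infty$ (resp. $\|\phi_x\|_{w_5}$) together with $\frac{d}{dt}\|\phi_x(t)\|^2\in L^1(0,\infty)$, whence $\|\phi_x(t)\|\to0$; interpolating with the uniform bound on $\|\phi_{xx}(t)\|_{w_1}$ and using the one-dimensional Sobolev inequality (the region $x\to+\infty$ being harmless since $u-U\to0$ there) gives $\sup_{x\in\mathbb{R}}|u(x,t)-U(x-st+x_0)|=\|\phi_x(t)\|_{L^\infty}\to0$ as $t\to+\infty$, which is the asserted stability.
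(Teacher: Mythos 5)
Your proposal is correct and follows essentially the same route as the paper: anti-derivative reformulation, zeroth/first/second-order weighted energy estimates with the same weights $w_1,\dots,w_5$ chosen via $U_z=Ug(U)$, regularization of the singular test functions by replacing $U$ with $U+\varepsilon$ followed by Fatou's lemma, closure by local existence plus continuation, and $L^\infty$-decay from the $L^1$-in-time integrability of $\|\phi_z\|_{1}^2$ and of its time derivative. The only cosmetic difference is that you keep the quasilinear diffusion $\frac{1}{u}\phi_{xx}$ in the perturbation equation, whereas the paper isolates $\left(\frac{\phi_z}{U}\right)_z$ and places the quadratic remainder $G_z$ on the right-hand side, which amounts to the same estimates.
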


\section{Existence of the shock profile}	
This section is devoted to showing that there exists a shock profile
$U(x-st)$ connecting $0$ and $u_-$, if and only if $u_-$ and $s$ satisfy the Rankine-Hugoniot condition \eqref{R-H} and the generalized shock condition \eqref{Lax's}; along this line, we prove Theorem \ref{Existence of the shock profile}.
	
\begin{proof}[Proof of Theorem \ref{Existence of the shock profile}]	
To prove $(i)$, we suppose that \eqref{original model} admits a shock profile $U(x - st)$
connecting $0$ and $u_-$. Therefore, $U(z)$ with $z=x-st$, must satisfy \eqref{shock profile equ} and $U_z(z)<0$ due to $u_->0$. Formally, we expect that $\lim\limits_{z\rightarrow\pm\infty}\frac{U_z}{U}(z)=0$, then integrating \eqref{shock profile equ} in $z$ over $\mathbb{R}$ yields
\begin{equation}\nonumber
	-s\left(0-u_{-}\right)+f\left(0\right)-f\left(u_{-}\right)=0,
\end{equation}
which implies the Rankine-Hugoniot condition \eqref{R-H}. Additionally, integrating \eqref{shock profile equ} in $z$ over $(z,+\infty)$, we have
\begin{equation}\label{eq1}
	-s U-\frac{U_z}{U}+f(U)=a,
\end{equation}
where $a$ is an integral constant. Since $U( \pm \infty)=u_{ \pm}$ and $\lim\limits_{z\rightarrow\pm\infty}\frac{U_z}{U}(z)=0$, letting $z\rightarrow\pm\infty$ in \eqref{eq1}, we see easily that
$$
a=-s u_{ \pm}+f\left(u_{ \pm}\right).
$$
This along with \eqref{eq1} gives
\begin{equation}\label{ODE}
	 U_z=U\left[f(U)-f(u_{ \pm})-s(U-u_{ \pm})\right].
\end{equation}
Define
\begin{equation}\label{g}
	g(U)\triangleq f(U)-f(u_{ \pm})-s(U-u_{ \pm}),
\end{equation}
and let
\begin{equation}\label{h(U)}
	h(U)\triangleq U\left[f(U)-f(u_{ \pm})-s(U-u_{ \pm})\right].
\end{equation}
Thus, $h(U)=Ug(U)$. From standard ODE theory, the equation \eqref{ODE} with $h(u_\pm)=0$ admits a smooth solution $U(z)$ satisfying $U(\pm\infty)=u_\pm$ if and only if
\begin{equation}
	 h(U)<0,\quad\forall U\in(0,u_-).
\end{equation}
Here and hereafter $(0,u_-)$ denotes the set of values $u$ or $U$ between $0$ and $u_-$ ($0$ and $u_-$ are not included). Consequently, the generalized shock condition \eqref{Lax's} holds due to $U>0$ for $U\in(0,u_-)$. This completes the proof of $(i)$.

Conversely, we suppose that \eqref{R-H} and \eqref{Lax's} hold. By virtue of \eqref{Lax's}, \eqref{ODE} and \eqref{h(U)}, we only need to find the global solution of the following ODE
\begin{equation}\label{eq4}
	\frac{\mathrm{d}z}{\mathrm{d}U}=\frac{1}{h(U)}.
\end{equation}
In this case, we can solve \eqref{eq4} in the form
\begin{equation}\nonumber
	 z+\text{constant}=\int_{u_\star}^{U}\frac{1}{h(y)}\mathrm{d}y\triangleq H(U),
\end{equation}
where $u_\star=\left(u_++u_-\right)/2$. Since for any given $U\in(0,u_-)$, the function $\frac{1}{h(y)}$ is integrable over $(u_\star,U(z))$ and $H'(U)=\frac{1}{h(U)}<0$ due to $h(U)=Ug(U)$ and \eqref{Lax's}, we know that $\int_{u_\star}^{U}\frac{1}{h(y)}\mathrm{d}y$ is finite and monotonically decreasing for any $U\in(0,u_-)$. It remains to check the limiting values $U(\pm\infty)=u_{\pm}$. When $f'(0)<s<f'(u_-)$, we have $\left|g(U)\right|\sim\left|U-u_\pm\right|$ as $U\rightarrow u_\pm$ because of $g'(u_\pm)\neq0$. Therefore, $h(U)$ is equivalent to $-U^2$ when $U\rightarrow 0$ and that
\begin{equation}\label{eq18}
	\int_{u_\star}^{0}\frac{1}{h(y)}\mathrm{d}y=+\infty.	
\end{equation}
As $U\rightarrow u_-$, it becomes apparent that
\begin{equation}\label{eq19}
	\int_{u_\star}^{u_-}\frac{1}{h(y)}\mathrm{d}y=-\infty,	
\end{equation}
because of $h(U)\sim U-u_-$. When $s=f'(0)$ or $s=f'(u_-)$, since $g'(u_\pm)=\cdots=g^{(k_{\pm})}(u_\pm)=0$ with $g^{(k_{\pm}+1)}(u_\pm)\neq0$, we have
$\left|g(U)\right|\sim\left|U-u_\pm\right|^{1+k_\pm}$ for $k_\pm\geq1$ as $U\rightarrow u_\pm$ and thus \eqref{eq18} and \eqref{eq19} are satisfied. Therefore, according to the inverse function theorem, there exists a unique continuous function $H^{-1}$ such that
\begin{equation}\label{fanhanshu}
	U=H^{-1}(z-z_\star),\quad \forall z\in(-\infty,+\infty).	
\end{equation}
Thus, the existence of the shock profile $U(z)$ is proved. By virtue of \eqref{Lax's} and \eqref{ODE}, then \eqref{U_z<0} holds.

Next we are going to show the convergence rate for $U(z)$ as $z\rightarrow\pm\infty$. When $f'(0)<s<f'(u_-)$, following the asymptotic theory of ODEs, we get from \eqref{h(U)} as $z\rightarrow+\infty$ that
\begin{equation}\label{eq5}
	U_z\sim U^2\left(f'(0)-s\right),
\end{equation}
A direct calculation gives
\begin{equation}\nonumber
U\sim\left[C_1-(f'(0)-s)z \right]^{-1}~ \text{for } z>0.	
\end{equation}
Here $C_1>0$ is a bounded constant due to $U(+\infty)=0$, which implies the convergence rate $|z|^{-1}$ for $U(z)\rightarrow 0$ as $z\rightarrow+\infty$. When $z\rightarrow-\infty$, we have
\begin{equation}\label{eq6}
U_z\sim u_-\left(f'(u_-)-s\right) (U-u_-),	
\end{equation}
and hence
$$
u_{-}-U\sim C_2\mathrm{e}^{u_{-}(f'(u_-)-s)z}~ \text{for } z<0,
$$
where $C_2$ is a positive constant due to $U<u_-$ for $U\in(0,u_-)$. This implies the convergence rate for $U(z)\rightarrow u_-$ as $z\rightarrow -\infty$ is $\mathrm{e}^{-\lambda_-|z|}$ with $\lambda_-=u_{-}(f'(u_-)-s)$. In particular, when $s=f'(0)$ or $s=f'(u_-)$, we have
$\left|g(U)\right|\sim\left|U-u_\pm\right|^{1+k_\pm}$ for $k_\pm\geq1$ as $z\rightarrow \pm\infty$ as discussed above and this implies the convergence rate $|z|^{-\frac{1}{1+k_+}}$ and $|z|^{-\frac{1}{k_-}}$ for $U(z)\rightarrow u_\pm$ as $z\rightarrow \pm\infty$. Thus, the proof of Theorem \ref{Existence of the shock profile} is complete.
\end{proof}	

\begin{remark}
	 By virtue of $u_->0$, if $f'(0)<s<f'(u_-)$, it is easy to see from \eqref{eq5} and \eqref{eq6} that
	 \begin{equation}\label{|Uz|estimate}
	 	|U_z(z)| \leq CU^{2}(z)\quad \text{for all } z\in(-\infty,+\infty).
	 \end{equation}
Recalling that $U_z=Ug(U)$ and $\left|g(U)\right|\sim\left|U-u_\pm\right|^{1+k_\pm}$ for $k_\pm\geq1$ as $z\rightarrow \pm\infty$ if $s=f'(0)$ or $s=f'(u_-)$, we thus arrive at
	 \begin{equation}\label{|Uz|estimate1}
	|U_z(z)| \leq C\left|U(z)-0\right|^{2+k_+}\leq CU^{2}(z),\quad \text{as } z\rightarrow +\infty,
\end{equation}
and
\begin{equation}\label{|Uz|estimate2}
	|U_z(z)| \leq Cu_-\left|U(z)-u_-\right|^{1+k_-},\quad \text{as } z\rightarrow -\infty.
\end{equation}
\end{remark}

\section{Nonlinear stability}
In this section, we prove Theorem \ref{stability theorem} and hence establish the nonlinear stability of the shock profile of \eqref{original model}-\eqref{original initial data1}. One may observe that the system \eqref{original model} exhibits a singularity in the vicinity of $z=+\infty$, hence we shall devise new strategies to address this challenge. Before proceeding, we first use the technique of anti-derivative to reformulate the problem.
\subsection{Reformulation of the problem }
Let $U(x-st)$ be the shock profile obtained in Theorem \ref{Existence of the shock profile}. According to \eqref{anti-derivative} and \eqref{anti-derivative2}, we have
\begin{equation}\label{shift2}
	\int_{-\infty}^{+\infty}\left( u(x, t)-U\left(x-s t+x_0\right)\right) d x =\int_{-\infty}^{+\infty}\left( u_0(x)-U\left(x+x_0\right)\right)d x
		=0.
\end{equation}
We thus decompose the solution of \eqref{original model} as
\begin{equation}\label{decompose}
	u(x, t)=U\left(x-s t+x_0\right)+\phi_z(z, t),	
\end{equation}
where $z=x-st$. That is
\begin{equation}\label{u-U}
	\phi(z, t)=\int_{-\infty}^z\left(u(y, t)-U\left(y-s t+x_0\right) \right)d y,	
\end{equation}
for all $z\in \mathbb{R}$ and $t\geq0$. It then follows from \eqref{shift2} that
\begin{equation}\label{eq20}
\phi(\pm \infty,t)=0,\quad \text{for all }t>0.	
\end{equation}
Without loss of generality, we assume that the translation $x_0=0$. Substituting \eqref{decompose} into \eqref{original model}, using \eqref{shock profile equ} and \eqref{eq20} and integrating the system with respect to $z$, the problem \eqref{original model} is reduced to
\begin{equation}\nonumber
	\phi_t=s \phi_z+\left(\ln \left(U+\phi_z\right)-\ln U\right)_z-\left(f\left(U+\phi_z\right)-f(U)\right),
\end{equation}
which is rewritten as
\begin{equation}\label{phiequ}
	\phi_t+g'(U) \phi_z-\left(\frac{\phi_z}{U}\right)_z=F+G_z,
\end{equation}
where
\begin{equation}\label{F}
	 F \equiv-\left(f\left(U+\phi_z\right)-f(U)-f^{\prime}(U) \phi_z\right),
\end{equation}
and
\begin{equation}\label{G}
	G \equiv \ln \left(U+\phi_z\right)-\ln U-\frac{\phi_z}{U}.
\end{equation}
The initial perturbation of $\phi$ is thus given by
\begin{equation}\label{phi00}
	\phi(z,0)=\int_{-\infty}^z\left( u_0(y)-U(y)\right) d y~\text{with } \phi(\pm \infty,0)=0.
\end{equation}

We search for solutions of the system \eqref{phiequ} in the following space
\begin{equation}\nonumber
	\begin{gathered}
		X(0, T):=\left\{\phi(z, t) \mid \phi \in C\left([0, T] ; L^2\cap L_{w_4}^2 \right), \phi_z \in C\left([0, T] ; H_{w_1}^1\right) \cap L^2\left((0, T) ; L_{w_2}^2\cap L_{w_5}^2\right),\right. \\
		\left. \phi_{z z} \in L^2\left((0, T) ; H_{w_3}^1\right)\right\},
	\end{gathered}
\end{equation}
where the weight functions $w_i$, $i=1,\cdots,5$, are given by
\begin{equation}\label{w}
	\begin{aligned}
		&w_1(U)=U^{-2},\quad w_2(U)=U^{-1},\quad w_3(U)=U^{-3},\\&
		w_4(U)=\frac{U\left(U-u_-\right)}{g(U)},\quad w_5(U)=\frac{\left(U-u_-\right)}{g(U)}.
	\end{aligned}
\end{equation}
Define
\begin{equation}\label{N(t)}
	N(t):=\sup_{\tau\in [0,t]}\left(\left\|\phi(\cdot,\tau)\right\|+\left\|\phi_z(\cdot,\tau)\right\|_{1,w_1}\right).
\end{equation}
Clearly, if $\phi_z\in L_{w_1}^2$, then $\phi_z\in L^2$ since $w_1(U)\geq1$. Then, by the weighted Sobolev embedding inequality, it holds that
\begin{equation}\label{sobolev embedding}
	\sup_{\tau\in [0,t]}\left\{\left\|\phi(\cdot,\tau)\right\|_{L^{\infty}},\left\|\sqrt{w_1}\phi_z(\cdot,\tau)\right\|_{L^{\infty}}\right\} \leq CN(t).
\end{equation}

Theorem \ref{stability theorem} is a consequence of the following theorem.
\begin{theorem}\label{phi stability}
	Suppose $\phi_0\in L^2(\mathbb{R})\cap L_{w_4}^2(\mathbb{R})$ and that $\phi_{0z}\in H_{w_1}^{1}(\mathbb{R})$. Then there exists a positive constant $\delta_{1}$ such that if $N(0)\leq \delta_{1}$, the Cauchy problem \eqref{phiequ}-\eqref{G} with \eqref{phi00} has a unique global solution $\phi\in X(0,\infty)$ satisfying
	\begin{equation}\label{priori estimate}
		\begin{aligned}
			 \|\phi\|^2+\|\phi\|_{w_4}^2&+\left\|\phi_z\right\|_{1, w_1}^2+\int_0^t\left(\left\|\phi_z(\tau)\right\|_{w_2}^2+\left\|\phi_z(\tau)\right\|_{w_5}^2+\left\|\phi_{zz}(\tau)\right\|_{1, w_3}^2\right) \\&
		 \leq C\left(\|\phi_0\|^2+\|\phi_0\|_{w_4}^2+\left\|\phi_{0z}\right\|_{1, w_1}^2\right)\leq CN^2(0),
		\end{aligned}
	\end{equation}
	for any $t\in [0,\infty)$, where $w_i$, $i=1,\cdots,5$, are defined by \eqref{w}. Moreover, $\phi_z$ tends to $0$ in the maximum norm as $t \rightarrow+\infty$, that is
	\begin{equation}\label{asymptotic}
		\sup _{x \in \mathbb{R}}|\phi_{z}(z, t)| \rightarrow 0, \quad \text { as } t \rightarrow+\infty.
	\end{equation}	
\end{theorem}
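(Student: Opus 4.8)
The plan is to prove Theorem~\ref{phi stability} by the standard three-step scheme for the nonlinear stability of viscous shock profiles: a local existence theorem for~\eqref{phiequ}--\eqref{G} in the weighted solution space $X(0,T)$, uniform-in-time a priori bounds of the form~\eqref{priori estimate}, and a continuity argument combining the two; the decay~\eqref{asymptotic} then follows from the space--time integrability already contained in~\eqref{priori estimate}.

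\emph{Local existence and the basic structure.} For $\phi_0$ with $N(0)$ small I would solve~\eqref{phiequ} with data~\eqref{phi00} on a short interval $[0,T_0]$ by a standard iteration in $X(0,T_0)$, obtaining a unique solution with $N(T_0)\le 2N(0)$ and $T_0$ depending only on $N(0)$. The point requiring care is that~\eqref{phiequ} is uniformly parabolic only while $U+\phi_z$ stays away from $0$; this is ensured by the weighted embedding~\eqref{sobolev embedding}, which, since $w_1=U^{-2}$, gives $|\phi_z(z,t)|\le CU(z)N(t)$ and hence $U+\phi_z\ge(1-CN(t))U>0$ once $N(t)$ is small. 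The very same bound controls the logarithmic nonlinearity: Taylor expanding~\eqref{G} yields $|G|\le C\phi_z^2U^{-2}$, and after differentiating, analogous bounds for $G_z,G_{zz},\dots$ in which every negative power of $U$ is compensated by one of the weights $w_i$. In short, the singularity at $z=+\infty$ does not obstruct the estimates — it is exactly tracked by the weight functions.

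\emph{A priori estimates.} Assuming a solution on $[0,T]$ with $N(T)$ small, I would establish~\eqref{priori estimate} via the usual weighted-energy hierarchy. (i) Multiplying~\eqref{phiequ} by $\phi$ and by $w_4(U)\phi$ and integrating in $z$ gives the $\|\phi\|^2$ and $\|\phi\|_{w_4}^2$ bounds together with the dissipation $\int_0^t(\|\phi_z\|_{w_2}^2+\|\phi_z\|_{w_5}^2)$ (the weights $w_2=U^{-1}$ and $w_5=(U-u_-)/g(U)$ being precisely those generated by the diffusion term $(\phi_z/U)_z$ against these multipliers); here the convection term $g'(U)\phi_z$ combined with the choice $w_4=U(U-u_-)/g(U)$ produces, after an integration by parts, a negative definite contribution, which is precisely where the generalized shock condition~\eqref{Lax's} ($g(U)<0$ on $(0,u_-)$) enters. (ii) An $H^1_{w_1}$ energy estimate — multiplying~\eqref{phiequ} by $-(w_1(U)\phi_z)_z$ and integrating — controls $\|\phi_z\|_{w_1}^2$ and produces the parabolic dissipation $\int_0^t\|\phi_{zz}\|_{w_3}^2$, the weight $w_3=U^{-3}=w_1/U$ appearing because of the $U^{-1}$ in the diffusion term $(\phi_z/U)_z$. (iii) A further weighted step at the level of $\phi_{zz}$ completes the energy with $\|\phi_{zz}\|_{w_1}^2$ and yields the remaining dissipation $\int_0^t\|\phi_{zzz}\|_{w_3}^2$. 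Throughout, the nonlinear terms $F$, $G_z$ and the commutator terms arising when a derivative lands on a weight $w_i(U)$ (each carrying a factor $U_z$) are absorbed using the smallness of $N(T)$, the embedding~\eqref{sobolev embedding}, and the decay rate $|U_z|\le CU^2$ from~\eqref{|Uz|estimate} (resp.~\eqref{|Uz|estimate1}--\eqref{|Uz|estimate2} in the degenerate cases).

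\emph{The main obstacle, and closing the argument.} The genuine difficulty, announced in the introduction, is that each $w_i(U)$ blows up as $z\to+\infty$ (where $U\to0$), so the integrations by parts in (i)--(iii) a priori leave nonzero boundary terms at $z=+\infty$. I would handle this by replacing each $w_i$ with a nonsingular approximant $w_i^{\eps}$, $\eps>0$, coinciding with $w_i$ on $\{U\ge\eps\}$, bounded, and arranged so that $|(w_i^{\eps})'|\le C|w_i'|$ with $C$ independent of $\eps$; running all of (i)--(iii) with $w_i$ replaced by $w_i^{\eps}$, the boundary terms now genuinely vanish and every constant stays $\eps$-independent, and letting $\eps\to0^{+}$ while applying Fatou's lemma to the left-hand side recovers~\eqref{priori estimate}. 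The continuity argument is then routine: \eqref{priori estimate} holds whenever $N(T)\le\delta_0$ for a fixed small $\delta_0$ and gives $N(t)^2\le C_0N(0)^2$ on such intervals, so setting $\delta_1:=\delta_0/(2\sqrt{C_0})$ forces $N(t)\le\sqrt{C_0}\,N(0)\le\delta_0/2<\delta_0$ as long as the solution exists, the threshold is never reached, and the local solution extends to $t=+\infty$; uniqueness follows by the same energy method applied to the difference of two solutions. Finally, \eqref{priori estimate} gives $\int_0^{\infty}\|\phi_z(\tau)\|^2\,d\tau<\infty$ (since $w_2=U^{-1}$ is bounded below) and a uniform bound on $\|\phi_{zz}(t)\|$; checking from~\eqref{phiequ} that $\frac{d}{dt}\|\phi_z(t)\|^2$ is integrable on $(0,\infty)$ yields $\|\phi_z(t)\|\to0$, whence $\|\phi_z(\cdot,t)\|_{L^\infty}^2\le 2\|\phi_z(\cdot,t)\|\,\|\phi_{zz}(\cdot,t)\|\to0$ as $t\to+\infty$, which is~\eqref{asymptotic}.
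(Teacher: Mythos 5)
Your proposal is correct and follows essentially the same route as the paper: local existence plus a weighted energy hierarchy with multipliers built from $w_4$, $w_1=U^{-2}$ and $w_3=U^{-3}$, with the singular weights regularized by an $\varepsilon$-parameter (the paper uses $U_\varepsilon=U+\varepsilon$ and the multipliers $(-\phi_z/U_\varepsilon^2)_z$, $(-\phi_{zz}/U_\varepsilon^2)_z$), uniform-in-$\varepsilon$ bounds, Fatou's lemma, a continuation argument, and the decay \eqref{asymptotic} obtained from the time-integrability of $\|\phi_z\|_1^2$ and of its derivative together with the interpolation $\|\phi_z\|_{L^\infty}^2\le 2\|\phi_z\|\,\|\phi_{zz}\|$. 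The only differences are cosmetic (your approximant coincides with $w_i$ on $\{U\ge\varepsilon\}$ rather than being $(U+\varepsilon)^{-2}$, and you phrase the sign of the convection contribution slightly differently), and they do not change the argument.
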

The global existence of $\phi$, as stated in Theorem \ref{phi stability}, can be treated by the weighted energy method based on local existence with the \textit{a priori} estimates provided below \cite{same line}.

\begin{proposition}[Local existence]\label{local existence}
Suppose $\phi_0\in L^2(\mathbb{R})\cap L_{w_4}^2(\mathbb{R})$ and that $\phi_{0z}\in H_{w_1}^{1}(\mathbb{R})$. For any $\delta_0>0$, there exists a positive constant $T_0$ depending on $\delta_0$ such that if $N(0)\leq \delta_0$, then the problem \eqref{phiequ}-\eqref{G} with \eqref{phi00} has a unique solution $\phi\in X(0,T_0)$ satisfying $N(t)\leq 2N(0)$ for any $0\leq t\leq T_0$.
\end{proposition}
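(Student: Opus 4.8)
The statement is a standard local-in-time existence result, and the plan is to obtain it by a linear iteration scheme closed by weighted energy estimates, in the manner of Kawashima--Matsumura \cite{same line}. The one point that genuinely needs the weighted framework is that the diffusion coefficient $1/U(x-st)$ in \eqref{phiequ} is smooth and positive but unbounded as $z\to+\infty$, and that the logarithmic nonlinearity requires $u=U+\phi_z>0$. Both are handled by smallness: since $N(0)\le\delta_0$, the weighted Sobolev embedding \eqref{sobolev embedding} gives $\|\sqrt{w_1}\,\phi_z\|_{L^\infty}=\|\phi_z/U\|_{L^\infty}\le CN(0)$, so $U+\phi_z=U(1+\phi_z/U)$ stays strictly positive, and by Taylor expansion the remainders in \eqref{F}--\eqref{G} satisfy $|F|\le C|\phi_z|^2$ and $|G|\le C|\phi_z/U|^2$, which is what produces the small factors needed to close the scheme.

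Concretely, I would set $\phi^{(0)}\equiv\phi_0$ and, given $\phi^{(n)}$, define $\phi^{(n+1)}$ as the solution of the linear problem obtained from \eqref{phiequ} by freezing the right-hand side at $\phi^{(n)}$:
\begin{equation}\nonumber
	\phi^{(n+1)}_t+g'(U)\,\phi^{(n+1)}_z-\Big(\tfrac{\phi^{(n+1)}_z}{U}\Big)_z=F^{(n)}+G^{(n)}_z,\qquad \phi^{(n+1)}(\cdot,0)=\phi_0,
\end{equation}
where $F^{(n)},G^{(n)}$ denote \eqref{F}--\eqref{G} with $\phi$ replaced by $\phi^{(n)}$. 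To solve this linear equation I would first regularize, replacing $U$ in the diffusion by a smoothed version of $\max\{U,\epsilon\}$ so as to obtain a uniformly parabolic equation to which classical linear theory applies, and then derive estimates independent of $\epsilon$: test successively against $\phi^{(n+1),\epsilon}$, against $w_1$-weighted multipliers, and against their $z$-derivatives, integrate by parts, and absorb the resulting lower-order and weighted boundary terms using the smallness of $N$ and the algebraic structure of the weights in \eqref{w} (so that $w_1/U$, $U_z w_1/U^2$, $w_3/U$, etc.\ stay bounded). Letting $\epsilon\to0$ by weak-$\ast$ compactness and Fatou's lemma yields $\phi^{(n+1)}\in X(0,T)$ together with the auxiliary $L_{w_4}^2$ bound for $\phi^{(n+1)}$.

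These estimates give, schematically, $N_{n+1}(t)^2\le CN(0)^2+C\,t\,\big(1+N_n(t)\big)N_n(t)^2$ on $[0,T]$ (with a parallel inequality for $\|\phi^{(n+1)}\|_{w_4}$), so that choosing $T_0=T_0(\delta_0)$ sufficiently small yields, by induction, $N_n(t)\le 2N(0)$ for all $n$ and all $0\le t\le T_0$, i.e.\ the iterates are bounded in $X(0,T_0)$. I would then show that $\phi^{(n)}\mapsto\phi^{(n+1)}$ is a contraction in the weaker metric of $C([0,T_0];L^2)\cap L^2((0,T_0);H^1)$: the difference of two consecutive iterates solves the same linear equation with a source that is Lipschitz in the previous difference with a small constant (again from the smallness of $N$ and the quadratic structure of $F,G$), so after possibly shrinking $T_0$ the sequence converges. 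The limit $\phi$ belongs to $X(0,T_0)$, solves \eqref{phiequ}--\eqref{G} with \eqref{phi00}, satisfies $N(t)\le 2N(0)$ inherited from the iterates, and is unique by the same energy identity applied to the difference of two solutions; continuity in $t$ in the relevant spaces follows from the energy identities as well.

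The main obstacle is the linear step: producing energy estimates for a parabolic equation whose diffusion coefficient $1/U$ blows up at $z=+\infty$, in weighted spaces whose weights $w_i$ also blow up there, with all boundary contributions from integration by parts genuinely under control and uniformly in the regularization parameter $\epsilon$. This is exactly where the precise choice of weights in \eqref{w} and the smallness of $N(0)$ enter; once the weighted linear theory is available, the remaining fixed-point argument is routine.
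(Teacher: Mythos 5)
The paper does not actually prove this proposition: it states that ``Proposition \ref{local existence} can be proved in the standard way (see \cite{local existence} for instance)'' and omits the argument, so there is no in-paper proof to compare against line by line. Your plan is the standard one for such statements (linear iteration frozen at the previous iterate, weighted energy estimates for the linearized parabolic problem, uniform bounds, contraction in a weaker topology), and it correctly isolates the only nonstandard feature here, namely the diffusion coefficient $1/U$ and the weights $w_i$ blowing up as $z\to+\infty$ together with the need for $U+\phi_z>0$. Your device for this --- regularizing $U$ to $U_\epsilon$, proving estimates uniform in $\epsilon$, and passing to the limit via Fatou --- is exactly the mechanism the authors themselves deploy in the \emph{a priori} estimates (Lemmas \ref{H1} and \ref{H2}), so the plan is consistent with the paper's machinery and I see no structural obstruction to carrying it out.

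Two caveats worth flagging. First, your schematic bound $N_{n+1}(t)^2\le CN(0)^2+C\,t\,(1+N_n)N_n^2$ is too optimistic at the top order of regularity: the quadratic source terms $F^{(n)}_z$, $G^{(n)}_{zz}$ feeding the estimate for $\int_0^t\|\phi^{(n+1)}_{zzz}\|^2_{w_3}$ involve $\phi^{(n)}_{zzz}$, which is only controlled through the time-integrated dissipation norm of the previous iterate; those contributions do not carry a factor of $t$ and must be absorbed by the smallness of $N_n$ (equivalently of $\delta_0$), not by shrinking $T_0$. The correct closed inequality couples $N_{n+1}^2$ with the dissipation integrals and closes only for $\delta_0$ small. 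Second, and relatedly, the positivity $U+\phi_z=U(1+\phi_z/U)>0$ forces $C\delta_0<1$, so the ``for any $\delta_0>0$'' in the statement cannot be taken literally; this looseness is in the paper's own formulation (its Theorem \ref{phi stability} only ever invokes the proposition with $\delta_0$ effectively small), but your write-up should make the restriction explicit rather than leave it implicit.
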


\begin{proposition}[{\it A priori} estimates]\label{proposition priori estimate}
	Let $\phi$ be a solution in $X(0,T)$ obtained in Proposition \ref{local existence} for a positive constant $T$. Then there exists a positive constant $\delta_{2}$, independent of $T$, such that if
	\begin{equation}\label{priori assumption}
		N(t)\leq \delta_{2}~\text{for all } 0\leq t\leq T,	
	\end{equation}
	then the estimate \eqref{priori estimate} holds for $t\in [0,T]$.
\end{proposition}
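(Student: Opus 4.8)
\textbf{Proof plan for Proposition \ref{proposition priori estimate}.}

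The plan is to run a weighted energy method in three stages: a basic (zeroth-order) estimate on $\phi$, a first-order estimate on $\phi_z$, and a second-order estimate on $\phi_{zz}$, then combine these into the full bound \eqref{priori estimate}. The key new feature, forced by the singularity of the diffusion at $U=0$ (i.e.\ as $z\to+\infty$), is that all the weights $w_1=U^{-2}$, $w_2=U^{-1}$, $w_3=U^{-3}$, $w_4=U(U-u_-)/g(U)$, $w_5=(U-u_-)/g(U)$ blow up at $+\infty$; consequently integration by parts a priori produces boundary terms at $z=+\infty$ that need not vanish. To handle this cleanly I would first prove every estimate with an $\epsilon$-regularized weight $w_i^\epsilon$ (for instance replacing $U$ by $U+\epsilon$ in the definitions, so that $w_i^\epsilon$ is bounded and the boundary terms genuinely vanish), derive bounds uniform in $\epsilon$, and then pass to the limit $\epsilon\to 0^+$ using Fatou's Lemma — exactly the device announced in the Introduction. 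I will suppress the $\epsilon$ in this sketch and describe the formal computation.

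\emph{Basic estimate.} Multiply \eqref{phiequ} by a suitable multiplier of the form $A(U)\phi$ (with $A$ chosen so that the convection term $g'(U)\phi_z\cdot A(U)\phi$ integrates, after using $U_z=Ug(U)$, into a good negative term plus controllable pieces) and integrate over $\mathbb{R}$. The term $-(\phi_z/U)_z$ produces, after integration by parts, $\int (\phi_z^2/U)A(U) + \int \phi_z\phi\,(A(U)/U)_z\,U_z$, giving the first dissipation term $\sim\|\phi_z\|_{w_2}^2$ together with a $\|\phi_z\|^2_{w_5}$–type term coming from the convection structure (the appearance of $w_5=(U-u_-)/g(U)$ and $w_4$ is exactly dictated by requiring the $g'(U)$-term to have a sign, since near $z=-\infty$, $g'(U)$ changes behavior in the degenerate cases \eqref{degenerate lax's 1}–\eqref{degenerate lax's 2}). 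The right-hand side contributions $\int (F+G_z)A(U)\phi$ are handled by Taylor expansion: $|F|\le C|\phi_z|^2$ and $|G|\le C|\phi_z|^2/U$ on the region where $|\phi_z|\le \tfrac12 U$ (guaranteed by \eqref{sobolev embedding} and the smallness \eqref{priori assumption}, since $|\sqrt{w_1}\phi_z|_{L^\infty}=|\phi_z/U|_{L^\infty}\le CN(t)\le C\delta_2$), so these are cubic and absorbed once $\delta_2$ is small. This yields
\begin{equation}\nonumber
\|\phi(t)\|^2+\|\phi(t)\|_{w_4}^2+\int_0^t\!\big(\|\phi_z\|_{w_2}^2+\|\phi_z\|_{w_5}^2\big)\,d\tau\le C\big(\|\phi_0\|^2+\|\phi_0\|_{w_4}^2\big)+C N(t)\!\int_0^t\!(\cdots).
\end{equation}

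\emph{First- and second-order estimates.} Differentiate \eqref{phiequ} in $z$, multiply by $w_1(U)\phi_z=U^{-2}\phi_z$ and integrate; the principal diffusion term now gives $\int w_1 U^{-1}\phi_{zz}^2\sim \|\phi_{zz}\|_{w_3}^2$ (this is where $w_3=U^{-3}$ is born), while the commutator terms $[\,(\cdot/U)_z,\partial_z\,]$ and $(w_1)_z$ generate lower-order contributions controlled by the already-bounded quantities $\|\phi_z\|_{w_2}^2$, $\|\phi_z\|_{w_5}^2$ and by cubic error terms. The inhomogeneous terms $\partial_z F$ and $\partial_{zz}G$ are again estimated by Taylor expansion together with \eqref{sobolev embedding}; here one must be a little careful that the weight attached to each quadratic term $\phi_z\phi_{zz}$, $U_z\phi_z^2$, etc., is dominated by the available dissipation after a Cauchy–Schwarz/Young split. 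Repeating the procedure one more time (differentiate twice, multiply by $w_3\phi_{zz}$) closes the $H^2_{w_3}$ part. Summing the three estimates with appropriate small coefficients, absorbing all cubic terms $C N(t)\int_0^t(\cdots)$ into the left-hand side once $N(t)\le\delta_2\ll1$, and finally invoking Fatou to remove the regularization, gives \eqref{priori estimate}; the bound $\le CN^2(0)$ is then immediate from the definitions \eqref{phi00}, \eqref{N(t)} and $\|\phi_0\|_{w_4}^2\le CN^2(0)$, since $w_4$ is bounded in the nondegenerate case and in the degenerate cases the hypothesis puts $\phi_0$ in $L^2_{w_4}$.

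\emph{Main obstacle.} The genuinely delicate point — and the reason the three cases \eqref{nondegenerate lax's}–\eqref{degenerate lax's 2} are separated — is choosing the multipliers/weights so that the convection term $g'(U)\phi_z$ contributes a term with a \emph{good sign} simultaneously near $z=-\infty$ and near $z=+\infty$, despite $g'$ vanishing to order $k_\pm$ at the endpoints and despite the weights being singular at $+\infty$; this is exactly what fixes $w_4$ and $w_5$, and it is where the algebra must be done by hand rather than quoted. The secondary obstacle is bookkeeping the non-vanishing boundary terms at $+\infty$: one must verify that with the $\epsilon$-regularized weights every boundary term is $O(\phi\,\phi_z(U+\epsilon)^{-k})\big|_{z=+\infty}=0$ (using $\phi(+\infty,t)=0$ from \eqref{eq20} and the decay rates \eqref{f''>0}–\eqref{f'(u_-)=s} of $U$), and that all the resulting $\epsilon$-uniform bounds survive the passage to the limit; none of this is conceptually hard but it is where the ``singularity'' of the problem actually bites.
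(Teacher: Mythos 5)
Your plan is correct and follows essentially the same route as the paper: a three-tier weighted energy argument (multiplier $w_4(U)\phi$ at the zeroth order, then multipliers built from the $\epsilon$-regularized weight $U_\epsilon=U+\epsilon$ at the first and second orders, with uniform-in-$\epsilon$ bounds and Fatou's lemma to remove the regularization), and cubic terms absorbed via the smallness $N(t)\le\delta_2$ together with \eqref{sobolev embedding}. The two computations you defer are precisely the ones the paper carries out by hand: the algebraic identity $(g\,w_4)(U)=U(U-u_-)$, so that $(g\,w_4)''\equiv 2$ and the convection term contributes the good term $-U_z\phi^2\ge 0$ while the diffusion yields the dissipation $\int (w_4/U)\phi_z^2=\|\phi_z\|_{w_5}^2$, and the pointwise bound $\left|w_4'(U)/w_4(U)\right|\left|U_z/U\right|\le C$ (using $k_\pm\ge 1$) needed after integrating $\int w_4\,G_z\,\phi$ by parts.
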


Proposition \ref{local existence} can be proved in the standard way (see \cite{local existence} for instance). So we omit its proof. However, it is crucial to establish
the \textit{a priori} estimates as stated in Proposition \ref{proposition priori estimate}, which will be proved in the next subsection.

\subsection{Basic estimates and stability theorem}	
We first derive the basic estimates, which play the key role in our proof.

\begin{lemma}
	Under the a priori assumption \eqref{priori assumption}, if $N(T)\ll1$, then it holds that
	\begin{equation}\label{F1}
		|F|\leq C \phi_{z}^2,\quad |G|\leq C \frac{\phi_{z}^2}{U^2},
	\end{equation}	
\begin{equation}\label{Fz}
|F_z|\leq C\left( |U_z|\phi_{z}^2+|\phi_{z}||\phi_{z z}|\right),
\end{equation}
\begin{equation}\label{Gz}
	|G_z|\leq C\left( \frac{|U_z|}{U^3}\phi_z^2+\frac{|\phi_{z}||\phi_{z z}| }{U^2}\right),
\end{equation}
\begin{equation}\label{Gzz}
	|G_{zz}| \leq C\left[\left(\frac{U_z^2 }{U^4}+\frac{|U_{zz}| }{U^3} \right)\phi_{z}^2+ \frac{\phi_{z z}^2 }{U^2}+\frac{|U_z| }{U^3}|\phi_{z}||\phi_{zz}|+\frac{ |\phi_{z}||\phi_{zzz}|}{U^2}\right].
\end{equation}
\begin{proof}
	From the formulations of $F$ and $G$ given in \eqref{F} and \eqref{G}, respectively, a direct calculation gives
\begin{equation}\nonumber
	\begin{aligned}
F_z&=-f^{\prime}\left(U+\phi_z\right)\left(U_z+\phi_{z z}\right)+f^{\prime}(U) U_z+f^{\prime \prime}(U) U_z \phi_z+f^{\prime}(U) \phi_{z z}	\\
&=-\left(f^{\prime}\left(U+\phi_{z}\right)-f^{\prime}(U)-f^{\prime \prime}(U) \phi_{z}\right) U_z-\left(f^{\prime}\left(U+\phi_{z }\right)-f^{\prime}(U)\right) \phi_{zz },	
	\end{aligned}
\end{equation}	
\begin{equation}\nonumber
		\begin{aligned}
		G_z&=\frac{1}{\left(U+\phi_{z}\right)}\left( U_z+\phi_{zz}\right)-\frac{U_z }{U}-\frac{ \phi_{z z}}{U}-\left( -\frac{U_z}{U^{2} }\right)\phi_{z}\\
		&=\left[\frac{ 1}{\left(U+\phi_{z}\right)}-\frac{1}{U}-\left(-\frac{1}{U^2} \right)\phi_{z} \right]U_z+\left[\frac{ 1}{\left(U+\phi_{z}\right)}-\frac{1}{U} \right]\phi_{z z},	
	\end{aligned}
\end{equation}
and
\begin{equation}\nonumber
	\begin{aligned}
G_{zz}&=	\left[\frac{\left(U_z+\phi_{zz}\right)}{U+\phi_{z}}-\frac{U_z}{U} -\frac{\phi_{z z}}{U} +\frac{U_z}{U^2} \phi_z\right]_z\\
&=\left[-\frac{1}{\left( U+\phi_z\right)^2}-\left(-\frac{1}{U^2}\right)-\frac{2\phi_z}{U^3} \right] U_z^2-\frac{\phi_{zz}^2}{\left( U+\phi_z\right)^2}+\left(\frac{1}{ U+\phi_z}-\frac{1}{U}\right) \phi_{z z z}\\&\quad+2\left[-\frac{1}{\left( U+\phi_z\right)^2}-\left(-\frac{1}{U^2}\right)\right] U_z \phi_{z z}+\left[\frac{1}{ U+\phi_z}-\frac{1}{U}-\left(-\frac{\phi_z}{U^2} \right)\right] U_{z z},
\end{aligned}
\end{equation}
where, thanks to $f\in C^{\max\{k_{\pm}+1, 3\}}(\mathbb{R})$, $\|\frac{\phi_{z}}{U}\|_{L^{\infty}}\leq CN(t)$ from \eqref{sobolev embedding} and Taylor's expansion, it is straightforward to imply \eqref{F1}, \eqref{Fz}, \eqref{Gz} and \eqref{Gzz}.
\end{proof}

\end{lemma}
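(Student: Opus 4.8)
The plan is to express $F$ and $G$, as well as their first and second derivatives, as first– or second–order Taylor remainders, and then to use the \emph{a priori} smallness of $N(T)$ to dominate the singular factors produced by the logarithm near $z=+\infty$. The only genuinely non–routine step, which I would isolate at the very beginning, is the following: since $\sqrt{w_1}=U^{-1}$, the weighted Sobolev bound \eqref{sobolev embedding} reads $\|\phi_z/U\|_{L^\infty}\le CN(t)$, so for $N(T)\ll1$ one gets the pointwise inequalities $\tfrac12 U\le U+\phi_z\le\tfrac32 U$ and $|\phi_z|\le CN(T)\,U\le CU$ for all $z\in\mathbb{R}$ and $t\in[0,T]$ (using that $N$ is nondecreasing). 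In particular $u=U+\phi_z$ is strictly positive and comparable to $U$, every power $(U+\phi_z)^{-k}$ is controlled by $CU^{-k}$, and any intermediate value $\eta$ between $U$ and $U+\phi_z$ satisfies $\eta\ge\tfrac12 U$.

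Granting this, \eqref{F1} follows from Taylor's theorem: writing $F=-\tfrac12 f''(\xi)\phi_z^2$ with $\xi$ between $U$ and $U+\phi_z$, the facts that $f\in C^3$ and $\xi$ stays in the compact interval $[0,\tfrac32 u_-]$ give $|F|\le C\phi_z^2$; writing $G=-\tfrac1{2\eta^2}\phi_z^2$, which is the second–order remainder of $x\mapsto\ln x$ at $U$, and using $\eta\ge\tfrac12 U$ gives $|G|\le C\phi_z^2/U^2$. For the derivative estimates I would differentiate $F$ and $G$ exactly as in the displayed formulas, and then recognise each coefficient multiplying $U_z$, $U_{zz}$, $\phi_{zz}$ or $\phi_{zzz}$ as a Taylor remainder of one of the smooth functions $f'$, $x^{-1}$, $x^{-2}$ at $U$, evaluated between $U$ and $U+\phi_z$. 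For instance $\tfrac1{U+\phi_z}-\tfrac1U=-\tfrac{\phi_z}{U(U+\phi_z)}$, $\tfrac1{U+\phi_z}-\tfrac1U+\tfrac{\phi_z}{U^2}=\tfrac1{\eta^3}\phi_z^2$, $\tfrac1{U^2}-\tfrac1{(U+\phi_z)^2}=-\tfrac{\phi_z(2U+\phi_z)}{U^2(U+\phi_z)^2}$, and $\tfrac1{U^2}-\tfrac1{(U+\phi_z)^2}-\tfrac{2\phi_z}{U^3}=\tfrac3{\eta^4}\phi_z^2$; by the comparability step these are bounded respectively by $C|\phi_z|/U^2$, $C\phi_z^2/U^3$, $C|\phi_z|/U^3$ and $C\phi_z^2/U^4$. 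Likewise $|f'(U+\phi_z)-f'(U)-f''(U)\phi_z|\le C\phi_z^2$ and $|f'(U+\phi_z)-f'(U)|\le C|\phi_z|$ on the relevant compact set. Substituting these bounds, together with $|\phi_z|\le CU$, into the explicit expressions for $F_z$, $G_z$, $G_{zz}$ and collecting terms gives \eqref{Fz}, \eqref{Gz} and \eqref{Gzz}.

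The main obstacle is precisely the control of the negative powers $(U+\phi_z)^{-k}$ near $z=+\infty$, where $U\to0$: without the smallness of $N(T)$ the perturbed quantity $u=U+\phi_z$ could come much closer to $0$ than $U$ itself, or even vanish, and the remainders of $\ln x$, $x^{-1}$, $x^{-2}$ would then fail to be absorbable into the powers of $U$ appearing on the right–hand sides of \eqref{F1}--\eqref{Gzz}. Once the comparability $\tfrac12 U\le u\le\tfrac32 U$ is secured, the rest is a bookkeeping application of Taylor's theorem on the intervals $[\tfrac12 U,\tfrac32 U]$ using $f\in C^{\max\{k_\pm+1,3\}}(\mathbb{R})$.
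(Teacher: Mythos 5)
Your proposal is correct and follows essentially the same route as the paper: compute the explicit expressions for $F_z$, $G_z$, $G_{zz}$, use the weighted Sobolev bound $\|\phi_z/U\|_{L^\infty}\le CN(t)$ to get $U+\phi_z\sim U$, and bound each coefficient as a Taylor remainder of $f'$, $x^{-1}$ or $x^{-2}$. You merely make explicit the comparability step and the remainder identities that the paper compresses into ``it is straightforward to imply.''
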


\begin{lemma}\label{L2}
Let the assumptions of Proposition \ref{proposition priori estimate} hold. If $N(T)\ll1$, then there exists a constant $C>0$ independent of $T$ such that
	\begin{equation}\label{L2 estimate}
		\|\phi\|_{w_4}^2+\int_0^t\left\|\phi_z(\tau)\right\|_{w_5}^2  \leq C\|\phi_0\|_{w_4}^2,
	\end{equation}
for any $t\in[0,T]$.
\end{lemma}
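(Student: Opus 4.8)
\emph{Proof plan.} The estimate will follow from the weighted $L^2$ energy method applied to the reformulated equation \eqref{phiequ}. The plan is to multiply \eqref{phiequ} by $w_4(U)\phi$ and integrate over $\mathbb{R}$. Since in the moving frame $z=x-st$ the profile $U=U(z)$ — and hence each weight $w_i(U)$ — is independent of $t$, the first term yields $\frac12\frac{d}{dt}\|\phi\|_{w_4}^2$. For the diffusion term I integrate by parts,
\[
-\int\left(\frac{\phi_z}{U}\right)_z w_4\phi=\int\frac{w_4}{U}\phi_z^2+\int\frac{(w_4)_z}{U}\phi_z\phi ,
\]
and observe that $\frac{w_4}{U}=w_5$ by \eqref{w}, so the first piece is exactly the sought dissipation $\|\phi_z\|_{w_5}^2$, while $\frac{(w_4)_z}{U}=w_4'(U)g(U)$ because $U_z=Ug(U)$ by \eqref{ODE}.

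The crucial point — and the reason for the choice of $w_4$ in \eqref{2.22}, \eqref{w} — is the algebraic identity $g(U)w_4(U)=U(U-u_-)$, which collapses all the first-order-in-$\phi$ terms: the transport term contributes the coefficient $g'(U)w_4$, the diffusion cross term contributes $g(U)w_4'(U)$, and their sum is
\[
g'(U)w_4+g(U)w_4'(U)=\frac{d}{dU}\bigl(g(U)w_4(U)\bigr)=\frac{d}{dU}\bigl(U(U-u_-)\bigr)=2U-u_- .
\]
Hence $\int(2U-u_-)\phi_z\phi=\frac12\int(2U-u_-)(\phi^2)_z=-\int U_z\phi^2=\int|U_z|\phi^2\ge0$ by \eqref{U_z<0}, a favourable term which may be kept on the left-hand side or discarded. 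This produces the identity
\[
\frac12\frac{d}{dt}\|\phi\|_{w_4}^2+\|\phi_z\|_{w_5}^2+\int|U_z|\phi^2=\int Fw_4\phi+\int G_zw_4\phi .
\]

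It remains to absorb the right-hand side into the dissipation. Since $0<U(z)<u_-$ we have $w_4=Uw_5\le u_-w_5$, so by \eqref{F1} and the embedding \eqref{sobolev embedding} (which gives $|\phi|\le CN(t)$ and $|\phi_z|\le CN(t)U$) the term $\int Fw_4\phi$ is bounded by $CN(t)\|\phi_z\|_{w_5}^2$. For $\int G_zw_4\phi$ I integrate by parts once more, $\int G_zw_4\phi=-\int Gw_4'(U)U_z\phi-\int Gw_4\phi_z$, and estimate each piece using $|G|\le C\phi_z^2/U^2$, $U_z=Ug(U)$, the pointwise bounds above, and the profile asymptotics from Theorem \ref{Existence of the shock profile} (so that $\frac{w_4}{U^2}=\frac{w_5}{U}$ combined with the extra factor $|\phi_z|\le CN(t)U$ reproduces $w_5$, and $\frac{|w_4'(U)||g(U)|}{U}\le Cw_5$); altogether the right-hand side is $\le CN(t)\|\phi_z\|_{w_5}^2$. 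Choosing $N(T)\ll1$ so that $CN(t)\le\frac12$, absorbing, and integrating in $t\in[0,T]$ (the initial datum $\|\phi_0\|_{w_4}$ being finite by hypothesis) yields \eqref{L2 estimate}.

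I expect the main obstacle to be twofold. First, recognizing the structural identity $g(U)w_4(U)=U(U-u_-)$, which is precisely what forces the first-order terms to combine into $2U-u_-$ with the favourable sign — this is the design principle behind the weights $w_4,w_5$. Second, the rigorous justification of the integrations by parts: $w_4$ and $w_5$ are unbounded as $z\to+\infty$ in cases (ii)--(iii), so the boundary terms at the singular state $u_+=0$ do not obviously vanish. In case (i), where $w_4$ is bounded, this is immediate; in general one removes the singular part of the weight through an $\varepsilon$-regularization $w_4^{\varepsilon}$, derives bounds uniform in $\varepsilon$, and passes to the limit via Fatou's lemma, as outlined in the Introduction.
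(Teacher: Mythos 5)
Your proposal is correct and follows essentially the same route as the paper: multiply \eqref{phiequ} by $w_4(U)\phi$, exploit the identity $g(U)w_4(U)=U(U-u_-)$ so that the first-order terms combine into $-\int U_z\phi^2\ge 0$, and absorb the nonlinear terms into the dissipation $\|\phi_z\|_{w_5}^2$ via the key pointwise bound $\bigl|\tfrac{w_4'(U)}{w_4(U)}\bigr|\bigl|\tfrac{U_z}{U}\bigr|\le C$, which the paper verifies case by case from the profile asymptotics exactly as you indicate. The only (harmless) difference is that you invoke the $\varepsilon$-regularization of the weight already here, whereas the paper reserves that device for the $H^1$ and $H^2$ estimates and integrates by parts directly in this lemma.
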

\begin{proof}
	 Multiplying \eqref{phiequ} by $w_4(U)\phi(z,t)$, we have
	 \begin{equation}\label{eq7}
	 	\begin{aligned}
	 \left(\frac{1}{2}w_4(U) \phi^{2}\right)_t&+\left[\frac{1}{2}\left(gw_4 \right)^{\prime}(U) \phi^2-\frac{w_4(U)}{U} \phi \phi_z\right]_z+\frac{w_4(U)}{U} \phi_z^2\\&-\frac{1}{2}\left( gw_4\right)^{\prime \prime}(U) U_z \phi^2=w_4(U)F\phi + w_4(U)G_z\phi,		
	 	\end{aligned}
	\end{equation}
where $'=\frac{\mathrm{d}}{\mathrm{d}U}$.	Integrating \eqref{eq7} over $\mathbb{R}\times (0,t)$, yields
\begin{equation}\label{eq8}
	\frac{1}{2}\int w_4(U) \phi^{2}+\int_{0}^{t}\int\frac{w_4(U)}{U}\phi_z^2\leq\frac{1}{2}\int w_4(U)\phi_0^{2}+\int_{0}^{t}\int w_4(U)F\phi + \int_{0}^{t}\int w_4(U)G_z\phi,
\end{equation}	
where we have ignored the term $-\frac{1}{2}\int_{0}^{t}\int\left( gw_4\right)^{\prime \prime}(U) U_z \phi^2$ due to $U_z(z)<0$ for $U\in(0,u_-)$ and $\left(gw_4\right)^{\prime \prime}(U)=2$. By \eqref{F1} and $\left\|\phi\right\|_{L^{\infty}}\leq CN(t)$ of \eqref{sobolev embedding}, we deduce that
\begin{equation}\label{eq9}
	\int_{0}^{t}\int w_4(U)F\phi\leq CN(t)\int_{0}^{t}\int w_4(U)\phi_z^2\leq CN(t)\int_{0}^{t}\int\frac{w_4(U)}{U} \phi_z^2.
\end{equation}	
For the last term on the right hand side of \eqref{eq8}, after using integration by parts, it follows from \eqref{F1}, $\left\|\frac{\phi_z }{U}\right\|_{L^{\infty}}\leq CN(t)$  and $\left\|\phi\right\|_{L^{\infty}}\leq CN(t)$ of \eqref{sobolev embedding} that
\begin{equation}\label{eq10}
	\begin{aligned}
\int_{0}^{t}\int w_4(U) G_z\phi&=-\int_{0}^{t}\int G\left[ w_4(U)\phi_z+\left(w_4(U) \right)_z\phi\right]\\&\leq C\int_{0}^{t}\int \frac{w_4(U)}{U^2}|\phi_z^3|+C\int_{0}^{t}\int \frac{\left(w_4(U) \right)_z}{U^2}|\phi|\phi_z^2\\&\leq CN(t)\int_{0}^{t}\int \frac{w_4(U)}{U}\phi_z^2+CN(t)\int_{0}^{t}\int \frac{\left(w_4(U) \right)_z}{U^2}\phi_z^2,		
	\end{aligned}
\end{equation}
where the last term on the right hand side can be rewritten as
\begin{equation}\label{eq25}
CN(t)\int_{0}^{t}\int \frac{\left(w_4(U) \right)_z}{U^2}\phi_z^2=CN(t)\int_{0}^{t}\int	\frac{w_4(U)}{U}\phi_z^2 \cdot\frac{w_4^{\prime}(U)}{w_4(U)}\frac{U_z}{U}.
\end{equation}
Next we shall show
\begin{equation}\label{eq21}
	\left|\frac{w_4^{\prime}(U)}{w_4(U)} \right| \left|\frac{U_z}{U} \right|\leq C, \quad \forall z\in (-\infty,\infty).
\end{equation}
When $f'(0)<s<f'(u_-)$, since $|g(U)|\sim\left| U-u_\pm\right|$ as $U\rightarrow u_\pm$, we have
\begin{equation}\label{const}
	 w_4(U)\sim \text{constant.}
\end{equation}
This along with \eqref{|Uz|estimate} gives \eqref{eq21}. While if $f'(0)=s<f'(u_-)$, by $|g(U)|\sim\left| U-0\right|^{1+k_+}$ as $U\rightarrow 0$, we have
\begin{equation}\label{eq22}
	\begin{aligned}
\left|\frac{w_4^{\prime}(U)}{w_4(U)} \right|&=\left| \frac{\left(2U-u_-\right)g(U)-U\left(U-u_-\right)g'(U) }{U\left(U-u_-\right)g(U)}\right|\\&\leq C\frac{\left|U-0\right|^{1+k_+} \left[\left|2U-u_- \right|+|U-u_-|\right]}{\left|U-0\right|^{2+k_+}|U-u_-|}\\&\leq C\frac{1}{|U-0|}.			
	\end{aligned}
\end{equation}
Moreover, in view of \eqref{|Uz|estimate1}, one obtains
\begin{equation}\nonumber
	 \left|\frac{U_z}{U}\right|\leq C\left|U-0\right|^{1+k_+},\quad \text{as } U\rightarrow0.
\end{equation}
Therefore, combining with \eqref{eq22}, we get
\begin{equation}\label{eq23}
\left|\frac{w_4^{\prime}(U)}{w_4(U)} \right| \left|\frac{U_z}{U} \right|\leq C\left|U-0\right|^{k_+}\leq C,\quad \text{as } U\rightarrow0,
\end{equation}
where we have used the fact $k_+\geq1$ due to $g'(0)=0$. Also
if $f'(0)<s=f'(u_-)$, we have
\begin{equation}\nonumber
	\begin{aligned}
		\left|\frac{w_4^{\prime}(U)}{w_4(U)} \right|&\leq C\frac{\left|U-u_-\right|^{1+k_-} \left[\left|2U-u_- \right|+|U|\right]}{\left|U-u_-\right|^{2+k_-}|U|}\\&\leq C\frac{1}{|U-u_-|},	\quad \text{as }U\rightarrow u_-,		
	\end{aligned}
\end{equation}
which in combination with \eqref{|Uz|estimate2} implies for $k_-\geq1$ that
\begin{equation}\label{eq24}
	\left|\frac{w_4^{\prime}(U)}{w_4(U)} \right| \left|\frac{U_z}{U} \right|\leq C\left|U-u_-\right|^{k_-}\leq C,\quad \text{as }U\rightarrow u_-.	
\end{equation}
Hence, thanks to \eqref{eq23} and \eqref{eq24}, \eqref{eq21} is true for $ z\in (-\infty,\infty)$. Then, by virtue of \eqref{eq25} and \eqref{eq21}, it gives
\begin{equation}\label{eq26}
	CN(t)\int_{0}^{t}\int \frac{\left(w_4(U) \right)_z}{U^2}\phi_z^2\leq CN(t)\int_{0}^{t}\int	\frac{w_4(U)}{U}\phi_z^2.
\end{equation}
Adding \eqref{eq8}, \eqref{eq9},  \eqref{eq10} and \eqref{eq26} and taking $\delta_{2}\leq\delta_0$, we can derive \eqref{L2 estimate}, provided that $N(t)$ is suitably small such that $N(t)\leq \delta_2$. The proof of Lemma \ref{L2} is complete.
\end{proof}

\begin{remark}
	 Note that when $f'(0)<s<f'(u_-)$, $w_4(U)\sim \text{const.}$ (see \eqref{const}), we also have the estimate for $\phi$ that
\begin{equation}\label{L2 estimate1}
	\|\phi\|^2+\int_0^t\left\|\phi_z(\tau)\right\|_{w_2}^2  \leq C\|\phi_0\|^2~\text{for any }t\in[0,T].
\end{equation}

When $f'(0)=s<f'(u_-)$, since $|g(U)|\sim\left| U-0\right|^{1+k_+}$ as $U\rightarrow 0$ for $k_+\geq1$, then $w_4(U)\sim \left| U-0\right|^{-k_+}$ and $\frac{w_4(U)}{U}\sim \left| U-0\right|^{-k_+-1}$, which along with \eqref{L2 estimate} implies
\begin{equation}\label{L2 estimate2}
	\|\phi\|^2+\int_0^t\left\|\phi_z(\tau)\right\|_{w_2}^2  \leq\|\phi\|_{w_4}^2+\int_0^t\left\|\phi_z(\tau)\right\|_{w_5}^2\leq C\|\phi_0\|_{w_4}^2.
\end{equation}	
\end{remark}

Then we give the estimate of the first order derivative of $\phi$.
\begin{lemma}\label{H1}
	Let the assumptions of Proposition \ref{proposition priori estimate} hold. If $N(T)\ll1$, then it holds for any $t\in [0,T]$ that
	\begin{equation}\label{H1 estimate}
			\|\phi_z\|_{w_1}^2+\int_0^t\left\|\phi_{zz}(\tau)\right\|_{w_3}^2 \leq C\left(\|\phi_0\|^2+\|\phi_0\|_{w_4}^2+\|\phi_{0z}\|_{w_1}^2\right).
	\end{equation}	
\end{lemma}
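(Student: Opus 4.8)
The plan is to perform a weighted $H^1$ energy estimate on \eqref{phiequ} with the singular weight $w_1(U)=U^{-2}$, extracting the dissipation $\int_0^t\|\phi_{zz}\|_{w_3}^2$ and then disposing of every remaining term by either absorbing it into that dissipation (using the smallness of $N(T)$) or bounding it via the lower-order estimates of Lemma \ref{L2}. Concretely, I would multiply \eqref{phiequ} by $-(w_1(U)\phi_z)_z$ and integrate over $\mathbb{R}\times(0,t)$. Because $w_1$ depends only on $z$, the term $-\phi_t(w_1\phi_z)_z$ becomes, after one integration by parts, $\tfrac12\frac{d}{dt}\int w_1\phi_z^2$ up to a boundary contribution at $z=\pm\infty$; expanding $(\phi_z/U)_z(w_1\phi_z)_z$ with $w_1=U^{-2}$, $w_1'=-2U^{-3}$ produces the good term $\int w_3\phi_{zz}^2$, the nonnegative term $2\int U^{-5}U_z^2\phi_z^2$, and the indefinite cross term $-3\int U^{-4}U_z\phi_z\phi_{zz}$; the convection term, after integrating its $g'(U)w_1\phi_z\phi_{zz}$ part by parts, yields terms of the type $\int(g''w_1+g'w_1')U_z\phi_z^2$ plus one more boundary contribution; and, keeping $G_z$ undifferentiated, the right-hand side gives $-\int(F+G_z)(w_1\phi_{zz}+w_1'U_z\phi_z)$, which involves only $\phi_z$ and $\phi_{zz}$, no $\phi_{zzz}$.

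The main obstacle is exactly those boundary terms: since $w_1=U^{-2}\to\infty$ as $z\to+\infty$, the quantities $[\phi_t w_1\phi_z]$ and $[g'(U)w_1\phi_z^2]$ at $z=+\infty$ cannot be discarded outright. Following the strategy from the Introduction, I would run the entire computation with the approximate weight $w_{1,\epsilon}(U):=(U+\epsilon)^{-2}$, which is bounded, increases to $w_1$ as $\epsilon\downarrow0$, satisfies $w_{1,\epsilon}(U)U^{-1}\le U^{-3}=w_3$ with $w_{1,\epsilon}(U)U^{-1}\uparrow w_3$, and obeys the structural bounds ($|w_{1,\epsilon}'|\le 2U^{-3}$, $w_{1,\epsilon}\le U^{-2}$, etc.) uniformly in $\epsilon$. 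With $w_{1,\epsilon}$ bounded and $\phi,\phi_z,\phi_t$ decaying at $\pm\infty$ for the local solution in $X(0,T)$, all boundary terms vanish, so the resulting inequality holds with constants independent of $\epsilon$. Letting $\epsilon\to0$ and invoking Fatou's Lemma on $\int w_{1,\epsilon}\phi_z^2$ and on $\int_0^t\int w_{1,\epsilon}(U)U^{-1}\phi_{zz}^2$ then restores the weights $w_1$, $w_3$ and yields \eqref{H1 estimate}.

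It remains to control, uniformly in $\epsilon$, all the error terms. The cross term $-3\int U^{-4}U_z\phi_z\phi_{zz}$ is split by Young's inequality into $\tfrac14\int w_3\phi_{zz}^2$ (absorbed into the dissipation) plus $C\int U^{-5}U_z^2\phi_z^2$; by $|U_z|\le CU^2$ from \eqref{|Uz|estimate} (and the degenerate analogues \eqref{|Uz|estimate1}, \eqref{|Uz|estimate2}, which only improve the decay near $z=-\infty$) the latter is $\le C\int w_2\phi_z^2$, which is controlled by Lemma \ref{L2} together with its consequences \eqref{L2 estimate1}, \eqref{L2 estimate2}; the convection terms are $\le C\int w_2\phi_z^2$ as well, by boundedness of $g',g''$ and of $U^{-2}|U_z|$ and $U^{-1}|U_z|$. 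For the right-hand side, using $|F|\le C\phi_z^2$ from \eqref{F1}, the bound \eqref{Gz} for $G_z$, the embeddings $\|\phi\|_{L^\infty}+\|\sqrt{w_1}\phi_z\|_{L^\infty}\le CN(t)$ (equivalently $|\phi_z|\le CN(t)U$) from \eqref{sobolev embedding}, the inequality $|U_z|\le CU^2$, and Young's inequality, every resulting monomial falls into one of two classes: a small multiple $CN(T)$ of $\int w_3\phi_{zz}^2$, absorbed into the dissipation; or a multiple of $\int w_2\phi_z^2$, absorbed into the right-hand side of \eqref{H1 estimate} through Lemma \ref{L2}. The three cases $f'(0)<s<f'(u_-)$, $f'(0)=s<f'(u_-)$, $f'(0)<s=f'(u_-)$ are treated uniformly, the degenerate ones merely sharpening the decay of $U_z$ at the relevant end. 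Collecting all bounds and taking $N(T)$ (hence $\delta_2$ in Proposition \ref{proposition priori estimate}) small enough completes the proof.
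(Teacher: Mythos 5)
Your proposal is correct and follows essentially the same route as the paper: testing \eqref{phiequ} against $\bigl(-\phi_z/(U+\epsilon)^2\bigr)_z$, extracting the dissipation $\int \phi_{zz}^2/(U U_\epsilon^2)$, absorbing the cross and nonlinear terms via $|U_z|\le CU^2$, \eqref{F1}, \eqref{Gz}, \eqref{sobolev embedding} and the smallness of $N(T)$, controlling the remainder by the $\int_0^t\|\phi_z\|_{w_2}^2$ dissipation from Lemma \ref{L2}, and finally passing to the limit $\epsilon\to 0$ by Fatou's Lemma. The only cosmetic difference is that you motivate the regularized weight by first exhibiting the boundary-term obstruction for $w_1=U^{-2}$, whereas the paper introduces $U_\epsilon=U+\epsilon$ from the outset.
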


\begin{proof}
	Denote $U_\epsilon\triangleq U+\epsilon$, where $\epsilon>0$ is a constant. Multiplying \eqref{phiequ} by $\left(-\frac{\phi_{z}}{U_\epsilon^2} \right)_z$, integrating the resultant equation with respect to $z$, noting
\begin{equation}\nonumber
	 \int \phi_t\left(-\frac{\phi_{z}}{U_\epsilon^2} \right)_z=\int \frac{\phi_{z}\phi_{zt}}{U_\epsilon^2}=\frac{1}{2}\frac{\mathrm{d}}{\mathrm{d}t}\int\frac{\phi_{z}^2}{U_\epsilon^2},
\end{equation}
and	
\begin{equation}\nonumber
	\begin{aligned}
	 -\int \left(\frac{\phi_{z}}{U} \right)_z\left(-\frac{\phi_{z}}{U_\epsilon^2} \right)_z&=\int \left( \frac{\phi_{zz}}{U}-\frac{U_z\phi_{z}}{U^2}\right) \left( \frac{\phi_{zz}}{U_\epsilon^2}-\frac{2U_{\epsilon z}}{U_\epsilon^3}\phi_{z}\right)\\&=\int \frac{\phi_{zz}^2}{UU_\epsilon^2}-2\int \frac{U_{z}}{UU_\epsilon^3}\phi_{z}\phi_{zz}-\int \frac{U_z}{U^2U_\epsilon^2}\phi_{z}\phi_{zz}+2\int \frac{U_z^2}{U^2U_\epsilon^3}\phi_{z}^2,
	 \end{aligned}
\end{equation}	
we get
\begin{equation}\label{eq11}
	 \begin{aligned}
\frac{1}{2}\frac{\mathrm{d}}{\mathrm{d}t}\int\frac{\phi_{z}^2}{U_\epsilon^2}+\int \frac{\phi_{zz}^2}{UU_\epsilon^2}=&2\int \frac{U_{z}}{UU_\epsilon^3}\phi_{z}\phi_{zz}+\int \frac{U_z}{U^2U_\epsilon^2}\phi_{z}\phi_{zz}-2\int \frac{U_z^2}{U^2U_\epsilon^3}\phi_{z}^2\\&-\int g'(U)\phi_z	\left(-\frac{\phi_{z}}{U_\epsilon^2} \right)_z+\int F \left(-\frac{\phi_{z}}{U_\epsilon^2} \right)_z +\int G_z \left(-\frac{\phi_{z}}{U_\epsilon^2} \right)_z.
	 \end{aligned}
\end{equation}

Next, we estimate the terms on the right hand side of \eqref{eq11}, which exhibit the singularity due to $u_+=0$. To control the terms on the right hand side of \eqref{eq11}, special attention is required as $z\rightarrow+\infty$. We only need to consider two cases, i.e. $f'(0)<s<f'(u_-)$ and $f'(0)=s<f'(u_-)$, as there is no singularity when $z\rightarrow-\infty$. Therefore, noting $\frac{1}{U_\epsilon}\leq \frac{1}{U}$, by \eqref{|Uz|estimate}, \eqref{|Uz|estimate1} and the Cauchy-Schwarz inequality, we have
\begin{equation}\label{eq12}
	\begin{aligned}
2\int \frac{U_{z}}{UU_\epsilon^3}\phi_{z}\phi_{zz}+\int \frac{U_z}{U^2U_\epsilon^2}\phi_{z}\phi_{zz}&\leq \frac{1}{4}\int \frac{\phi_{zz}^2}{UU_\epsilon^2}+C\int \left( \frac{U_{z}^2}{UU_\epsilon^4}+\frac{U_z^2}{U^3U_\epsilon^2}\right)	\phi_{z}^2\\&\leq	\frac{1}{4}\int \frac{\phi_{zz}^2}{UU_\epsilon^2}+C \int\frac{\phi_{z}^2}{U}.
	\end{aligned}
\end{equation}
Thanks to $|g'(U)|\leq C$ and $|g''(U)|\leq C$ due to \eqref{g} and $f\in C^{\max\{k_{\pm}+1, 3\}}(\mathbb{R})$, we derive that
\begin{equation}\nonumber
	\begin{aligned}
-\int g'(U)\phi_z	\left(-\frac{\phi_{z}}{U_\epsilon^2} \right)_z&=-\int g''(U)\frac{U_z}{U_\epsilon^2}\phi_{z}^2-\int g'(U)\frac{\phi_{z}\phi_{zz}}{U_\epsilon^2}\\&\leq \frac{1}{4}\int \frac{\phi_{zz}^2}{UU_\epsilon^2}+C\int \left( 1+g'(U)^2\frac{U}{U_\epsilon^2}\right)  \phi_{z}^2\\&\leq \frac{1}{4}\int \frac{\phi_{zz}^2}{UU_\epsilon^2}+C\int \frac{\phi_{z}^2}{U}.
\end{aligned}
\end{equation}	
For the last two terms on the right hand side of \eqref{eq11}, in view of \eqref{|Uz|estimate} and \eqref{|Uz|estimate1}, it follows from \eqref{F1} and the Cauchy-Schwarz inequality that
\begin{equation}\nonumber
	\begin{aligned}
	\int F \left(-\frac{\phi_{z}}{U_\epsilon^2} \right)_z&=-\int F\left( \frac{\phi_{zz}}{U_\epsilon^2}-\frac{2U_{z}}{U_\epsilon^3}\phi_{z}\right)\\
	&\leq C \left(\int\frac{\phi_z^2|\phi_{zz}|}{U_\epsilon^2}+\int\frac{|U_{z}|}{U_\epsilon^3}|\phi_z^3| \right)\\
	& \leq N(t)\int \frac{\phi_{zz}^2}{UU_\epsilon^2}+C \int\left(U+1\right)\phi_{z}^2,
	\end{aligned}
\end{equation}
where we have used the fact that $\left\|\frac{\phi_z }{U}\right\|_{L^{\infty}}\leq CN(t)$ of \eqref{sobolev embedding} in the second inequality. By \eqref{Gz}, the last term on the right hand side of \eqref{eq11} can be estimated as
\begin{equation}\label{eq13}
	\begin{aligned}
\int G_z \left(-\frac{\phi_{z}}{U_\epsilon^2} \right)_z&=-\int G_z \left( \frac{\phi_{zz}}{U_\epsilon^2}-\frac{2U_{z}}{U_\epsilon^3}\phi_{z}\right)\\
&\leq C \left[\int \left(\frac{|U_z|}{U^3U_\epsilon^2}+ \frac{|U_{z}|}{U^2U_\epsilon^3}\right) \phi_{z}^2|\phi_{zz}|+\int \frac{U_z^2}{U^3U_\epsilon^3}|\phi_{z}^3|+\int\frac{|\phi_{z}|\phi_{zz}^2}{U^2U_\epsilon^2} \right]\\
&\leq CN(t)\left[\int\left(\frac{1}{U_\epsilon^2}+ \frac{1}{UU_\epsilon}\right)|\phi_{z}||\phi_{zz}|+\int \frac{\phi_z^2}{U}+\int\frac{\phi_{zz}^2}{UU_\epsilon^2} \right]\\
&\leq CN(t)\int \frac{\phi_{zz}^2}{UU_\epsilon^2}+C \int \frac{ \phi_{z}^2}{U}.
	\end{aligned}
\end{equation}

Substituting \eqref{eq12}--\eqref{eq13} into \eqref{eq11} and integrating the equation with respect to $t$, noting $U\leq \frac{C}{U}$, by \eqref{L2 estimate}, \eqref{L2 estimate1} and \eqref{L2 estimate2}, we have
\begin{equation}\nonumber
	\begin{aligned}
		\frac{1}{2}\int\frac{\phi_{z}^2}{U_\epsilon^2}+\left(\frac{1}{2}-CN(t)\right)\int_0^t\int \frac{\phi_{zz}^2}{UU_\epsilon^2}\leq\frac{1}{2}\int\frac{\phi_{0z}^2}{U_\epsilon^2}+C\left( \int\phi_0^2+\int w_4(U)\phi_0^2\right),
	\end{aligned}
\end{equation}
which, by virtue of $\frac{1}{U_\epsilon^2}\leq \frac{1}{U^2}$, further gives
\begin{equation}\nonumber
		\int\frac{\phi_{z}^2}{U_\epsilon^2}+\int \frac{\phi_{zz}^2}{UU_\epsilon^2}\leq C\left(\int\frac{\phi_{0z}^2}{U^2}+\int\phi_0^2+\int w_4(U)\phi_0^2 \right),
\end{equation}
provided that $N(t)$ is small enough, where $C$ is independent of $\epsilon$. Moreover, it follows from the Fatou's Lemma that
\begin{equation}\label{eq17}
	\begin{aligned}
		\quad\int\frac{\phi_{z}^2}{U^2}+\int \frac{\phi_{zz}^2}{U^3}&\leq \varliminf_{\epsilon\rightarrow0 }
\left(\int\frac{\phi_{z}^2}{U_\epsilon^2}+\int \frac{\phi_{zz}^2}{UU_\epsilon^2}\right)\\&\leq C\left(\int\frac{\phi_{0z}^2}{U^2}+\int\phi_0^2+\int w_4(U)\phi_0^2 \right).		
	\end{aligned}
\end{equation}
Thus \eqref{H1 estimate} is proved and we finish the proof of Lemma \ref{H1}.	
\end{proof}

To close the proof of \textit{a priori} estimates, we give the estimate of the second order derivative of $\phi$.
\begin{lemma}\label{H2}
	Let the assumptions of Proposition \ref{proposition priori estimate} hold. If $N(T)\ll1$, then it holds for any $t\in[0,T]$ that
	\begin{equation}\label{H2 estimate}
		\|\phi_{zz}\|_{w_1}^2+\int_0^t\left\|\phi_{zzz}(\tau)\right\|_{w_3}^2 \leq C\left(\|\phi_0\|^2+\|\phi_0\|_{w_4}^2+\|\phi_{0z}\|_{1,w_1}^2\right).
	\end{equation}	 	
\end{lemma}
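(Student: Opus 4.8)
The plan is to carry out the weighted energy argument of Lemma~\ref{H1} one derivative higher. First I would differentiate the reformulated equation \eqref{phiequ} in $z$, obtaining
\[
\phi_{zt}+\bigl(g'(U)\phi_z\bigr)_z-\Bigl(\frac{\phi_z}{U}\Bigr)_{zz}=F_z+G_{zz},
\]
and then, with the regularized weight $U_\epsilon:=U+\epsilon$ ($\epsilon>0$ a constant, exactly as in the proof of Lemma~\ref{H1}), multiply this identity by $\bigl(-\phi_{zz}/U_\epsilon^2\bigr)_z$ and integrate over $\mathbb{R}$. The time term gives $\frac12\frac{d}{dt}\int\phi_{zz}^2/U_\epsilon^2$ after one integration by parts, and, expanding $(\phi_z/U)_{zz}$ and $\bigl(-\phi_{zz}/U_\epsilon^2\bigr)_z$ and multiplying out, the principal diffusion term produces the good dissipation $\int\phi_{zzz}^2/(UU_\epsilon^2)$ together with a finite list of lower-order terms carrying factors $U_z$, $U_{zz}$, $U_z^2$ over powers of $U$ and $U_\epsilon$. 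Because $U_\epsilon\ge\epsilon>0$ the weight is bounded, so every integration by parts has vanishing boundary terms and the resulting differential inequality is uniform in $\epsilon$; letting $\epsilon\to0$ with $1/U_\epsilon^2\le1/U^2$ and $1/(UU_\epsilon^2)\to1/U^3$, Fatou's Lemma then delivers \eqref{H2 estimate}, exactly as in \eqref{eq17}.

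The estimate of each term on the right follows the template of \eqref{eq12}--\eqref{eq13}, and, just as in Lemma~\ref{H1}, the analysis only needs care as $z\to+\infty$ (there is no singularity as $z\to-\infty$, where the exponential decay of $U-u_-$ and of $U_z$ makes everything routine). The geometric inputs are $|U_z|\le CU^2$ from \eqref{|Uz|estimate}--\eqref{|Uz|estimate1} and the analogous bound $|U_{zz}|\le CU^3$, which I would obtain by differentiating the profile ODE $U_z=Ug(U)$, namely $U_{zz}=\bigl(g(U)+Ug'(U)\bigr)Ug(U)$, so that near $z=+\infty$, where $|g(U)|\sim|U|^{1+k_+}$, one has $|U_{zz}|\sim U^{3+2k_+}\le CU^3$; in the degenerate cases $s=f'(0)$ or $s=f'(u_-)$ the extra powers only improve matters, exactly as in Lemma~\ref{L2}. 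Combining these with $U\le u_-$ (so $1/U\le C/U^3$ and $U\le C/U$), with $\|\phi_z/U\|_{L^\infty}\le CN(t)$ from \eqref{sobolev embedding}, and with $|F_z|$, $|G_{zz}|$ controlled by \eqref{Fz} and \eqref{Gzz}, the Cauchy--Schwarz inequality splits every term into an absorbable piece $CN(t)\int\phi_{zzz}^2/(UU_\epsilon^2)$ plus pieces of the forms $C\int\phi_z^2/U$ and $C\int\phi_{zz}^2/U$; the commutator $\bigl(g'(U)\phi_z\bigr)_z=g''(U)U_z\phi_z+g'(U)\phi_{zz}$, with $|g'|,|g''|\le C$, is handled the same way.

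After integrating in $t$ and choosing $\delta_2$ small enough that the $CN(t)$-coefficient on the dissipation is at most $\frac14$, the left side is bounded below by $\frac12\int\phi_{zz}^2/U_\epsilon^2+\frac14\int_0^t\int\phi_{zzz}^2/(UU_\epsilon^2)$ and the right side by $\frac12\int\phi_{0zz}^2/U_\epsilon^2\le\frac12\|\phi_{0z}\|_{1,w_1}^2$ plus $C\int_0^t\bigl(\|\phi_z\|_{w_2}^2+\|\phi_{zz}\|_{w_2}^2\bigr)$. These last two time integrals are already controlled: $\int_0^t\|\phi_z\|_{w_2}^2$ by Lemmas~\ref{L2} and \ref{H1} through \eqref{L2 estimate1}--\eqref{L2 estimate2}, and $\int_0^t\|\phi_{zz}\|_{w_2}^2\le C\int_0^t\|\phi_{zz}\|_{w_3}^2$ by \eqref{H1 estimate} since $w_2\le Cw_3$. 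Passing $\epsilon\to0$ via Fatou then yields \eqref{H2 estimate}.

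I expect the main obstacle to be the \emph{critical} contribution of $G_{zz}$ coming from the bare quadratic term $\sim\phi_{zz}^2/U^2$ in \eqref{Gzz}: after multiplication by $\bigl(-\phi_{zz}/U_\epsilon^2\bigr)_z$ it produces terms such as $\int\phi_{zz}^2|\phi_{zzz}|/(U^2U_\epsilon^2)$ and $\int|U_z|\,|\phi_{zz}|^3/(U^2U_\epsilon^3)$, and, unlike the other cubic terms, these carry no factor $\phi_z/U$, so they cannot be absorbed by the $L^\infty$-smallness alone. I would handle them with the one-dimensional interpolation inequality $\|\phi_{zz}\|_{L^\infty}^2\le2\|\phi_{zz}\|\,\|\phi_{zzz}\|$ --- recall $\|\phi_{zz}\|\le CN(t)$ since $w_1\ge1$ --- together with a weighted Young inequality that distributes the $U$-powers so that $\|\phi_{zzz}\|_{w_3}$ is absorbed into the dissipation (after $\int_0^t$) and the residue is controlled by the already established bound on $\int_0^t\|\phi_{zz}\|_{w_3}^2$ from Lemma~\ref{H1} and by $N(t)\ll1$. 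The remaining difficulty is purely organizational: keeping track of the long list of monomials in $U,U_\epsilon,U_z,U_{zz},\phi_z,\phi_{zz},\phi_{zzz}$ produced by expanding $(\phi_z/U)_{zz}$, $F_z$ and $G_{zz}$, and checking in each case that after inserting $|U_z|\le CU^2$, $|U_{zz}|\le CU^3$ and $U\le U_\epsilon$ the $U$-weights collapse to at worst $1/U$ on the $\phi_z^2,\phi_{zz}^2$ factors and to $N(t)/(UU_\epsilon^2)$ on $\phi_{zzz}^2$, all with constants independent of $\epsilon$.
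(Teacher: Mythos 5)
Your proposal is correct and follows essentially the same route as the paper: the same multiplier $\bigl(-\phi_{zz}/U_\epsilon^2\bigr)_z$ applied to the differentiated equation \eqref{phizequ}, the same bound $|U_{zz}|\le CU^3$ from the profile ODE, the same identification of the two critical cubic terms coming from the $\phi_{zz}^2/U^2$ part of \eqref{Gzz} (the paper's $I_1$ and $I_2$), handled by the same interpolation-plus-Young device, and the same Fatou passage $\epsilon\to0$. The only cosmetic difference is that the paper interpolates the \emph{weighted} sup-norm $\|U^{-1/2}U_\epsilon^{-1}\phi_{zz}\|_{L^\infty}$ so that the residue lands directly on the controlled quantity $\int\phi_{zz}^2/U^3$, rather than using the unweighted $\|\phi_{zz}\|_{L^\infty}^2\le 2\|\phi_{zz}\|\,\|\phi_{zzz}\|$; your "weighted Young inequality that distributes the $U$-powers" is exactly this adjustment.
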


\begin{proof}
We differentiate \eqref{phiequ} with respect to $z$ to get	
\begin{equation}\label{phizequ}
	\phi_{zt}+g''(U)U_z \phi_z+g'(U)\phi_{z z}-\left(\frac{\phi_z}{U}\right)_{zz}=F_z+G_{zz}.
\end{equation}	
Multiplying \eqref{phizequ} by $\left(-\frac{\phi_{zz}}{U_\epsilon^2} \right)_z$, integrating the resultant equations with respect to $z$ and using
\begin{equation}\nonumber
	\int \phi_{zt}\left(-\frac{\phi_{zz}}{U_\epsilon^2} \right)_z=\int \frac{\phi_{zz}\phi_{zzt}}{U_\epsilon^2}=\frac{1}{2}\frac{\mathrm{d}}{\mathrm{d}t}\int\frac{\phi_{zz}^2}{U_\epsilon^2},
\end{equation}
and	
\begin{equation}\nonumber
	\begin{aligned}
		-\int \left(\frac{\phi_{z}}{U} \right)_{zz}\left(-\frac{\phi_{zz}}{U_\epsilon^2} \right)_z&=\int \left( \frac{\phi_{zzz}}{U}-\frac{2U_z}{U^2}\phi_{zz}+\frac{2U_z^2}{U^3}\phi_{z}-\frac{U_{zz}}{U^2}\phi_{z}\right)\left( \frac{\phi_{zzz}}{U_\epsilon^2}-\frac{2U_{z}}{U_\epsilon^3}\phi_{zz}\right)
		\\&=\int \frac{\phi_{zzz}^2}{UU_\epsilon^2}-2\int\left(\frac{U_{\epsilon z}}{UU_\epsilon^3}+\frac{U_z}{U^2U_\epsilon^2} \right) \phi_{zz}\phi_{zzz}+4\int \frac{U_z^2}{U^2U_\epsilon^3}\phi_{zz}^2\\&\quad+\int\left( \frac{2U_z^2}{U^3U_\epsilon^2}-\frac{U_{zz}}{U^2U_\epsilon^2}\right) \phi_{z}\phi_{zzz}-2\int\left( \frac{2U_z^3}{U^3U_\epsilon^3}-\frac{U_{z}U_{zz}}{U^2U_\epsilon^3}\right) \phi_{z}\phi_{zz},
	\end{aligned}
\end{equation}	
we get
\begin{equation}\label{eq14}
	\begin{aligned}
	&\frac{1}{2}\frac{\mathrm{d}}{\mathrm{d}t}\int\frac{\phi_{zz}^2}{U_\epsilon^2}+\int \frac{\phi_{zzz}^2}{UU_\epsilon^2}+4\int \frac{U_z^2}{U^2U_\epsilon^3}\phi_{zz}^2\\&=\int (g''(U)U_z\phi_z+g'(U)\phi_{zz})\left(-\frac{\phi_{zz}}{U_\epsilon^2} \right)_z+2\int\left(\frac{U_{z}}{UU_\epsilon^3}+\frac{U_z}{U^2U_\epsilon^2} \right) \phi_{zz}\phi_{zzz}\\&\quad-\int\left( \frac{2U_z^2}{U^3U_\epsilon^2}-\frac{U_{zz}}{U^2U_\epsilon^2}\right) \phi_{z}\phi_{zzz}+2\int\left( \frac{2U_z^3}{U^3U_\epsilon^3}-\frac{U_{z}U_{zz}}{U^2U_\epsilon^3}\right) \phi_{z}\phi_{zz}\\&\quad-\int F_z\left(\frac{\phi_{zz}}{U_\epsilon^2} \right)_z-\int G_{zz}\left(\frac{\phi_{zz}}{U_\epsilon^2} \right)_z.
	\end{aligned}
\end{equation}
In view of \eqref{ODE}, one obtains
\begin{equation}\nonumber
	\begin{aligned}
	 U_{zz}&=U_z\left[f(U)-f(u_{ \pm})-s(U-u_{ \pm})\right]+U(f'(U)-s)U_z\\&=\frac{U_z^2}{U}+(f'(U)-s)UU_z,
	 \end{aligned}
\end{equation}
which, in combination with \eqref{|Uz|estimate}, implies when $f'(0)<s<f'(u_-)$ that
\begin{equation}\label{Uzz}
	 |U_{zz}(z)|\leq C\left(\frac{ |U_z(z)|}{U(z)}+U(z) \right)|U_z(z)| \leq CU^3(z)\quad \text{for all }z\in (-\infty,+\infty),
\end{equation}
and with \eqref{|Uz|estimate1} implies for $f'(0)=s$ that
\begin{equation}\label{Uzz1}
	|U_{zz}(z)|\leq C\left(\frac{ |U_z(z)|}{U(z)}+U(z) \right)|U_z(z)| \leq CU^3(z),\quad \text{as }z\rightarrow+\infty.
\end{equation}
Now by \eqref{g}, \eqref{Uzz}, \eqref{Uzz1} and $f\in C^{\max\{k_{\pm}+1, 3\}}(\mathbb{R})$, we get by integration by parts that
\begin{equation}\label{eq15}
	\begin{aligned}
		&\quad\int (g''(U)U_z\phi_z+g'(U)\phi_{zz})\left(-\frac{\phi_{zz}}{U_\epsilon^2} \right)_z\\&=-\int\left(g'''(U)\frac{U_z^2}{U_\epsilon^2}+g''(U)\frac{U_{zz}}{U_\epsilon^2}\right)\phi_{z}\phi_{zz}-2\int	g''(U)\frac{U_{z}}{U_\epsilon^2} \phi_{zz}^2-\int g'(U)\frac{\phi_{zz}\phi_{zzz}}{U_\epsilon^2}\\
		&\leq \frac{1}{8}\int \frac{\phi_{zzz}^2}{UU_\epsilon^2}+C\left[\int \left(g'(U)^2\frac{U}{U_\epsilon^2}+1\right)\phi_{zz}^2+\int \phi_{z}^2\right]\\
		&\leq\frac{1}{8}\int \frac{\phi_{zzz}^2}{UU_\epsilon^2}+C\left(\int\frac{\phi_{z z}^2}{U}+\int\frac{\phi_{z }^2}{U} \right).
	\end{aligned}
\end{equation}
Similarly, thanks to \eqref{|Uz|estimate}, \eqref{|Uz|estimate1} and the Cauchy-Schwarz inequality, we deduce that
\begin{equation}\nonumber
	 \begin{aligned}
&\quad2\int\left(\frac{U_{\epsilon z}}{UU_\epsilon^3}+\frac{U_z}{U^2U_\epsilon^2} \right) \phi_{zz}\phi_{zzz}-\int\left( \frac{2U_z^2}{U^3U_\epsilon^2}-\frac{U_{zz}}{U^2U_\epsilon^2}\right) \phi_{z}\phi_{zzz}\\&\leq C\left( \int\frac{|\phi_{zz}||\phi_{zzz}| }{U_\epsilon^2}+\int\frac{|\phi_{z}||\phi_{zzz}| }{U_\epsilon}\right)\\&\leq \frac{1}{4}	\int \frac{\phi_{zzz}^2}{UU_\epsilon^2}+C\left(\int \frac{U}{U_\epsilon^2}\phi_{z z}^2+\int U\phi_{z}^2\right)\\&\leq	 \frac{1}{4}	\int \frac{\phi_{zzz}^2}{UU_\epsilon^2}+C\left(\int \frac{\phi_{z z}^2}{U}+\int \frac{\phi_{z}^2}{U}\right),
	 \end{aligned}
\end{equation}
and
\begin{equation}\nonumber
	\begin{aligned}
		2\int\left( \frac{2U_z^3}{U^3U_\epsilon^3}-\frac{U_{z}U_{zz}}{U^2U_\epsilon^3}\right) \phi_{z}\phi_{zz}\leq C\left( \int\frac{\phi_{z z}^2}{U}+\int\frac{\phi_{z }^2}{U}\right).
	\end{aligned}
\end{equation}
For the last second term on the right hand side of \eqref{eq14}, noting $\left\|\frac{\phi_z }{U}\right\|_{L^{\infty}}\leq CN(t)$, it follows from \eqref{Fz} that
\begin{equation}\nonumber
	\begin{aligned}
\int F_z\left(-\frac{\phi_{zz}}{U_\epsilon^2} \right)_z&=-\int F_z	\left( \frac{\phi_{zzz}}{U_\epsilon^2}-\frac{2U_{z}}{U_\epsilon^3}\phi_{zz}\right)\\	
&\leq C \int \left(\frac{|U_z|}{U_\epsilon^2}\phi_{z}^2|\phi_{zzz}|+\frac{|U_z||U_{z}|}{U_\epsilon^3}\phi_{z}^2|\phi_{zz}|
+\frac{|\phi_{z}||\phi_{zz}||\phi_{zzz}|}{U_\epsilon^2}+\frac{|U_{z}|}{U_\epsilon^3}|\phi_{z}|\phi_{zz}^2\right)\\
&\leq CN(t) \int\left( |\phi_{z}||\phi_{zzz}|+|\phi_{z}||\phi_{zz}|+\frac{|\phi_{zz}||\phi_{zzz}| }{U_\epsilon}+\phi_{zz }^2\right)\\
& \leq N(t) \int \frac{\phi_{zzz}^2}{UU_\epsilon^2}+C\left[\int\left(U+1\right)\phi_{z z}^2  +\int\left( UU_\epsilon^2+1\right)\phi_{z }^2  \right].
	\end{aligned}
\end{equation}
Thanks to \eqref{Gzz}, the last term on the right hand side of \eqref{eq14} can be estimated as
\begin{eqnarray}\label{eq27}
\int G_{zz}\left(-\frac{\phi_{zz}}{U_\epsilon^2} \right)_z
&=&	-\int G_{zz} \left( \frac{\phi_{zzz}}{U_\epsilon^2}-\frac{2U_{z}}{U_\epsilon^3}\phi_{zz}\right) \nonumber \\
&\leq& C	 \bigg[\int\frac{\phi_{zz}^2|\phi_{zzz}|}{U^2U_\epsilon^2}+\int\frac{|U_{z}|}{U^2U_\epsilon^3} |\phi_{zz}^3|+\int\left(\frac{U_z^2}{U^4U_\epsilon^2}+\frac{|U_{zz}|}{U^3U_\epsilon^2} \right)\phi_{z}^2|\phi_{zzz}|\nonumber\\
&&+\int\left(\frac{|U_{z}|^3}{U^4U_\epsilon^3}+\frac{|U_{z}||U_{zz}|}{U^3U_\epsilon^3} \right)\phi_{z}^2|\phi_{zz}|+\int\left(\frac{|U_z|}{U^3U_\epsilon^2}+ \frac{|U_{z}|}{U^2U_\epsilon^3}\right) |\phi_{z}||\phi_{zz}||\phi_{z z z}| \nonumber\\
&&+\int \frac{|U_z|^2}{U^3U_\epsilon^3}|\phi_{z}|\phi_{zz}^2+\int \frac{|\phi_{z}|\phi_{zzz}^2}{U^2U_\epsilon^2}\bigg] \nonumber\\
&=:& I_1+I_2+I_3+I_4+I_5+I_6+I_7.
\end{eqnarray}
By virtue of $\|U^{-1}\phi_{zz}\|_{L^2}\leq N(t)$ and H\"{o}lder's inequality, it holds that
\begin{equation}\nonumber
\begin{aligned}
	I_1=\int\frac{\phi_{zz}^2|\phi_{zzz}|}{U^2U_\epsilon^2}
	 &\leq \|U^{-\frac{1}{2}}U_{\epsilon}^{-1}\phi_{zz}\|_{L^{\infty}}\|U^{-1}\phi_{zz}\|_{L^2}\|U^{-\frac{1}{2}}U_{\epsilon}^{-1}\phi_{zzz}\|_{L^2} \\
	 &\leq CN(t)\|U^{-\frac{1}{2}}U_{\epsilon}^{-1}\phi_{zz}\|_{L^{2}}^{\frac{1}{2}}\|U^{-\frac{1}{2}}U_{\epsilon}^{-1}\phi_{zzz}\|_{L^{2}}^{\frac{3}{2}}\\
	 &\leq N(t)\int \frac{\phi_{zzz}^2}{UU_\epsilon^2}+C\int  \frac{\phi_{z z}^2}{U^3},
\end{aligned}	
\end{equation}
and that,
\begin{equation}\nonumber
\begin{aligned}
I_2=\int\frac{|U_{z}|}{U^2U_\epsilon^3} |\phi_{zz}^3|\leq C\int\frac{|\phi_{zz}^3|}{U_\epsilon^3}
    &\leq\|U^{-\frac{1}{2}}U_{\epsilon}^{-1}\phi_{zz}\|_{L^{\infty}}\|U^{\frac{1}{2}}U_{\epsilon}^{-\frac{1}{2}}\phi_{zz}\|_{L^{2}}\|U^{-\frac{3}{2}}\phi_{zz}\|_{L^2}\\
	&\leq CN(t)\|U^{-\frac{1}{2}}U_{\epsilon}^{-1}\phi_{zzz}\|_{L^{2}}^{\frac{1}{2}}\|U^{-\frac{1}{2}}U_{\epsilon}^{-1}\phi_{zz}\|_{L^{2}}^{\frac{1}{2}}\|U^{-\frac{3}{2}}\phi_{zz}\|_{L^2}\\
	&\leq N(t)\int \frac{\phi_{zzz}^2}{UU_\epsilon^2}+C\int  \frac{\phi_{z z}^2}{U^3},
\end{aligned}
\end{equation}
where we have used $\|\phi_{zz}\|_{L^2}\leq C\|U^{-1}\phi_{zz}\|_{L^2}\leq N(t)$ in the third inequality. Furthermore, we utilize \eqref{|Uz|estimate}, \eqref{|Uz|estimate1} and the Cauchy-Schwarz inequality to get
\begin{eqnarray}\label{eq16}
		&&\quad I_3+I_4+I_5+I_6+I_7 \notag \\
        && \leq CN(t)\left(\int \frac{|\phi_{z}||\phi_{zzz}|}{U_\epsilon}+\int|\phi_{z}||\phi_{zz}|+\int\frac{ |\phi_{zz}||\phi_{zzz}|}{U_\epsilon^2}+\int \frac{\phi_{zz}^2}{U}+\int \frac{\phi_{zzz}^2}{UU_\epsilon^2}\right)\notag \\
		&&\leq CN(t)\int \frac{\phi_{zzz}^2}{UU_\epsilon^2}+C\left(\int  \frac{\phi_{z z}^2}{U}+\int U\phi_{z }^2 \right),
\end{eqnarray}
where we have used the fact $\left\|\frac{\phi_z }{U}\right\|_{L^{\infty}}\leq CN(t)$ in the first inequality. Noting that $U\leq \frac{C}{U}\leq \frac{C}{U^3}$ and $1\leq \frac{C}{U}$, integrating the equation \eqref{eq14} with respect to $t$ and combining \eqref{eq15}--\eqref{eq16}, \eqref{L2 estimate}, \eqref{L2 estimate1} and \eqref{H1 estimate}, we get
\begin{equation}\nonumber
	\begin{aligned}
		\int\frac{\phi_{zz}^2}{UU_\epsilon}+\int_{0}^{t}\int \frac{ \phi_{zzz}^2 }{UU_\epsilon^2} &\leq\int\frac{\phi_{0zz}^2}{UU_\epsilon}+C\left(\int_{0}^{t}\int\frac{\phi_{z z}^2}{U^3}+\int_{0}^{t}\int  \frac{\phi_{z}^2}{U}\right)\\&\leq C\left(\int\frac{\phi_{0zz}^2}{U^2}+\int  \frac{\phi_{0z}^2}{U^2}+\int\phi_0^2+\int w_4(U)\phi_0^2\right),
	\end{aligned}	
\end{equation}
provided $N(t)$ is suitably small.  Following Fatou's Lemma as in the proof of \eqref{eq17}, we then obtain \eqref{H2 estimate} and finish the proof of Lemma \ref{H2}.
\end{proof}

\begin{proof}[Proof of Proposition \ref{proposition priori estimate}]
The desired estimate \eqref{priori estimate} follows from \eqref{L2 estimate}, \eqref{L2 estimate1}, \eqref{H1 estimate} and \eqref{H2 estimate}.\end{proof}

\begin{proof}[Proof of Theorem \ref{phi stability}]
Recalling from the proof of Lemma \ref{L2}, we have selected $\delta_{2}$ such that $\delta_{2}\leq\delta_0$, where $\delta_0$ is taken arbitrarily and mentioned in Proposition \ref{local existence}. Let $\delta_{1}=\min\left\{\delta_2/2, \delta_2/2C\right\}$, and $N(0)\leq\delta_{1}$. Since $N(0)\leq\delta_{1}\leq \delta_2/2<\delta_0$, then Proposition \ref{local existence} guarantees the existence of a unique solution $\phi\in X(0,T_0)$ for the system \eqref{phiequ} satisfying $N(t)\leq 2N(0)\leq 2\delta_{1}\leq2\delta_2/2=\delta_2$ for any $0\leq t\leq T_0$. Subsequently, by employing Proposition \ref{proposition priori estimate}, we establish that $N(T_0)\leq CN(0)\leq C\delta_{1}\leq C\delta_2/{2C}=\delta_2/2$. Now  considering the system \eqref{phiequ} with the \lq\lq initial data\rq\rq at $T_0$, and utilizing Proposition \ref{local existence} once again, we obtain a unique solution $\phi\in X(T_0,2T_0)$, eventually on the interval $[0,2T_0]$, satisfying $N(t)\leq 2N(T_0)\leq 2\delta_2/2=\delta_2$ for any $0\leq t\leq 2T_0$. Then, applying Proposition \ref{proposition priori estimate} with $T=2T_0$ again, we deduce that $N(2T_0)\leq CN(0)\leq C\delta_{1}\leq C\delta_2/{2C}=\delta_2/2<\delta_0$. Hence, by repeating this continuation process, we can derive a unique global solution $\phi\in X(0,\infty)$ that satisfies the estimate \eqref{priori estimate} for all $t\in[0,\infty)$.

To complete the proof, it remains to show \eqref{asymptotic}. We denote $h(t)\triangleq\|\phi_z(\cdot,t)\|_1^2$. Given the established estimate \eqref{priori estimate} for all $t\in[0,\infty)$, it is easy to see that
$$
\int_{0}^{\infty}\|\phi_z(\cdot,\tau)\|_1^2 \mathrm{d}\tau \leq \int_{0}^{\infty}\left(\left\|\phi_z(\tau)\right\|_{w_2}^2+\left\|\phi_z(\tau)\right\|_{w_5}^2+\left\|\phi_{zz}(\tau)\right\|_{w_3}^2\right)\mathrm{d}\tau\leq C N^2(0) \leq C,
$$	
which implies $h(t)\in L^1(0,\infty)$. Furthermore, by combining \eqref{priori estimate} with \eqref{phizequ}, a direct  calculation yields
\begin{equation}\nonumber
	\begin{aligned}
	\int_{0}^{\infty } |h'(\tau)|\mathrm{d}\tau
		&=
		\int_{0}^{\infty } \left|\frac{\mathrm{d}}{\mathrm{d}\tau}\|\phi_z(\cdot,\tau)\|_1^2\right|\mathrm{d}\tau\\
&=\int_{0}^{\infty } \left|\frac{\mathrm{d}}{\mathrm{d}\tau}\int\left(\phi_{z}^2+\phi_{z z}^2 \right)	\right|\mathrm{d}\tau\\&=2 \int_{0}^{\infty } \left|\int\left(\phi_z\phi_{zt}+\phi_{zz}\phi_{zzt}\right)\right|\mathrm{d}\tau\\& \leq C\int_{0}^{\infty }\int \left( \frac{\phi_{z}^2 }{U}+\frac{ \phi_{z z}^2}{U^3}+\frac{ \phi_{z zz}^2}{U^3}\right)\mathrm{d}\tau\\&\leq C,
	\end{aligned}
\end{equation}
where $\phi_{zzt}=-g^{\prime \prime \prime}(U) U_z^2 \phi_z-g^{\prime \prime}(U) U_{zz} \phi_z-2 g^{\prime \prime}(U) U_z \phi_{z z}-g^{\prime}(U) \phi_{z z z}+F_{z z}+\left(\left(\frac{\phi_z}{U}\right)_{z z }+G_{z z }\right)_z$ and
\[\int\phi_{zz}\left(\left(\frac{\phi_z}{U}\right)_{z z }+G_{z z }\right)_z=-\int\phi_{zzz}\left(\left(\frac{\phi_z}{U}\right)_{z z }+G_{z z }\right),\]
have been used. It then follows from the convergence lemma shown in the text book \cite{Matsumura-Nishihara-Textbook} that
\begin{equation}\nonumber
	\|\phi_z(\cdot,t)\|_1^2\rightarrow 0, \quad \text { as } t \rightarrow +\infty.
\end{equation}
This along with the H\"{o}lder's inequality implies that
\begin{equation}\nonumber
	\begin{aligned}
		\phi_z^2(z, t) & =2 \int_{-\infty}^z \phi_z \phi_{z z}(y, t) d y \\
		& \leqslant 2\left(\int_{-\infty}^{\infty} \phi_z^2 d y\right)^{1 / 2}\left(\int_{-\infty}^{\infty} \phi_{z z}^2 d y\right)^{1 / 2} \\
		& \leqslant C\left\|\phi_z(\cdot, t)\right\|_{1} \rightarrow 0,\quad \text { as } t \rightarrow+\infty.
	\end{aligned}
\end{equation}
Therefore \eqref{asymptotic} is proved and the proof of Theorem \ref{phi stability} is complete.
\end{proof}

\section{Numerical simulations}

In this section we carry out some numerical simulations. We numerically test three cases spanning non-degenerate and the degenerate shock profiles.

Case 1: $f'(0)<s<f'(u_-)$.
Let $f(u)=u^2/2$, $u_+=0$, $u_-=2$, then $s=\frac{f(u_-)-f(0)}{u_--0}=1$,  $f'(0)=0$ and $f'(u_-)=u_-=2$, such that $f'(0)<s<f'(u_-)$ holds (see Figure \ref{fig:case1}).
\begin{figure}[htbp]
	\begin{center}
		\includegraphics[width=7cm]{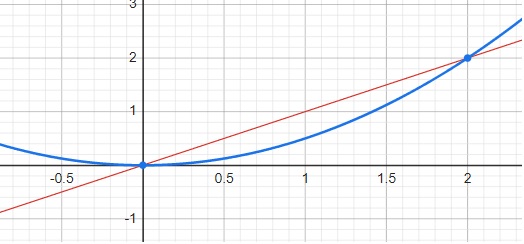}
		\caption{Case 1--the graph of $f(u)$}
		\label{fig:case1}
	\end{center}
\end{figure}
The initial data is chosen  as follows:
\begin{equation}\label{IV-2}
u_0(x)=
\begin{cases}
	2-\cos 8x \ e^{2x}, & \mbox{ for } x<0,\\
	\frac{1}{1+2x}, & \mbox{ for } x\ge 0.
\end{cases}
\end{equation}
\begin{figure}[htbp]
	\begin{center}
		\includegraphics[width=7cm]{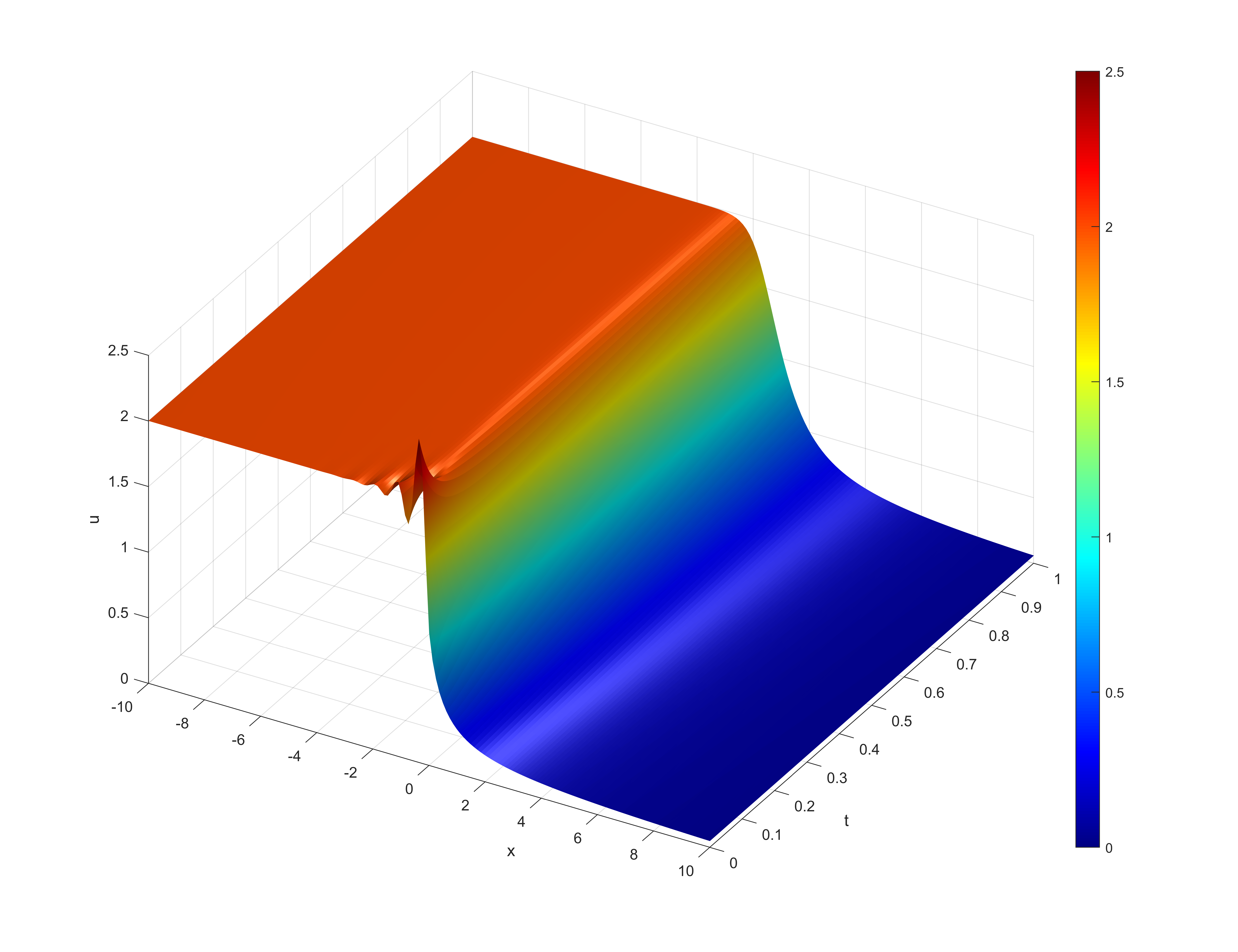}
		\includegraphics[width=7cm]{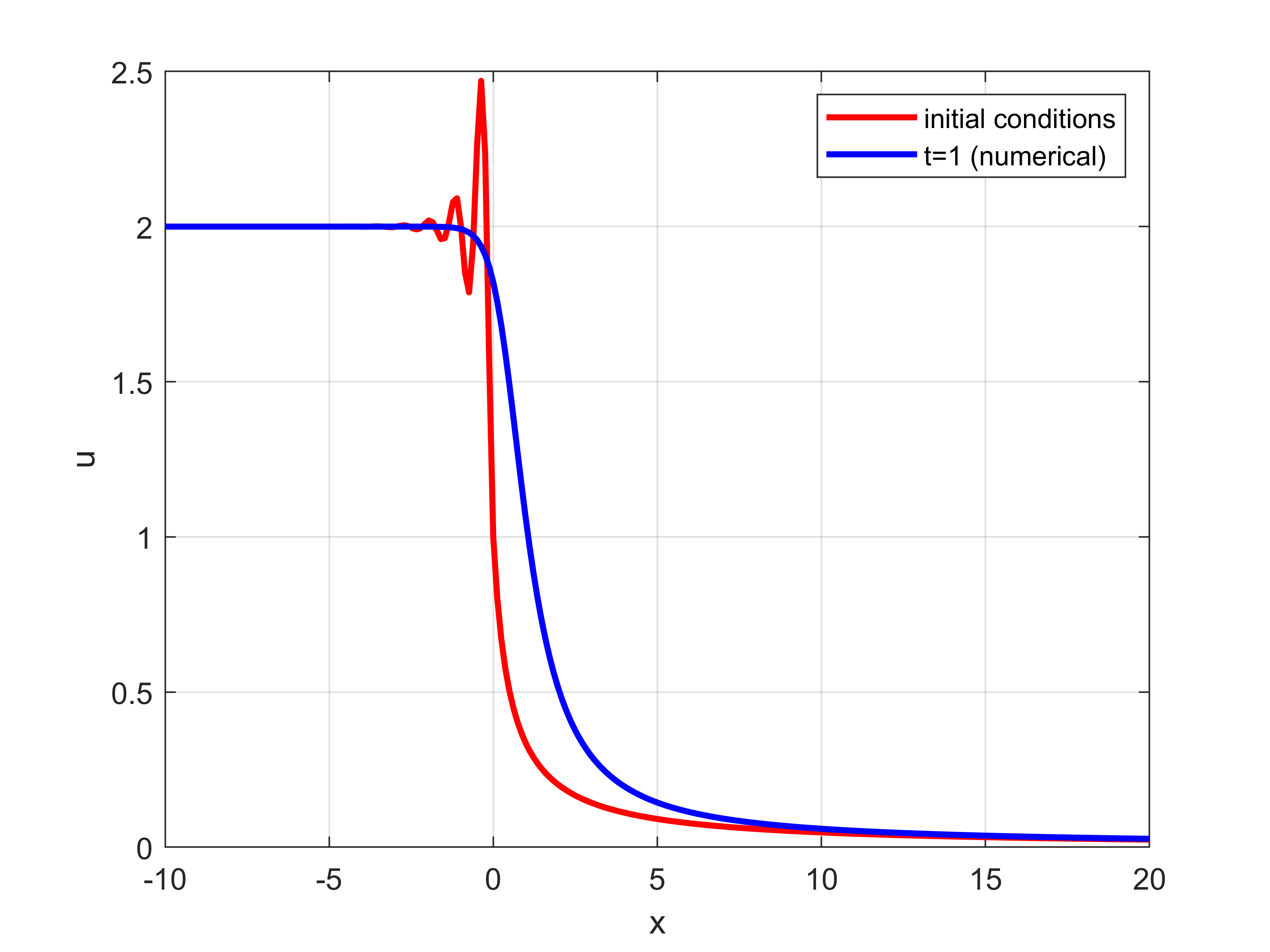}
		\caption{Case 1--the solution $u(x,t)$ behaves like a monotone viscous shock wave despite being initially perturbed}
		\label{fig1-2}
	\end{center}
\end{figure}
Figure \ref{fig1-2} demonstrates that initially, there are some oscillations for the solution $u(x,t)$. However, these oscillations quickly disappear due to the effect of strong diffusion, and finally the solution behaves like a viscous shock wave.

Case 2: $f'(0)=s<f'(u_-)$.
Let $f(u)=(u-1)^3+(u-1)^2+(u-1)$, $u_+=0$, $u_-=2$, then $s=\frac{f(u_-)-f(0)}{u_--0}=2$,  $f'(0)=2$ and $f'(u_-)=6$, such that $f'(0)=s<f'(u_-)$ holds (see Figure \ref{fig:case2}).
\begin{figure}[htbp]
	\begin{center}
		\includegraphics[width=7cm]{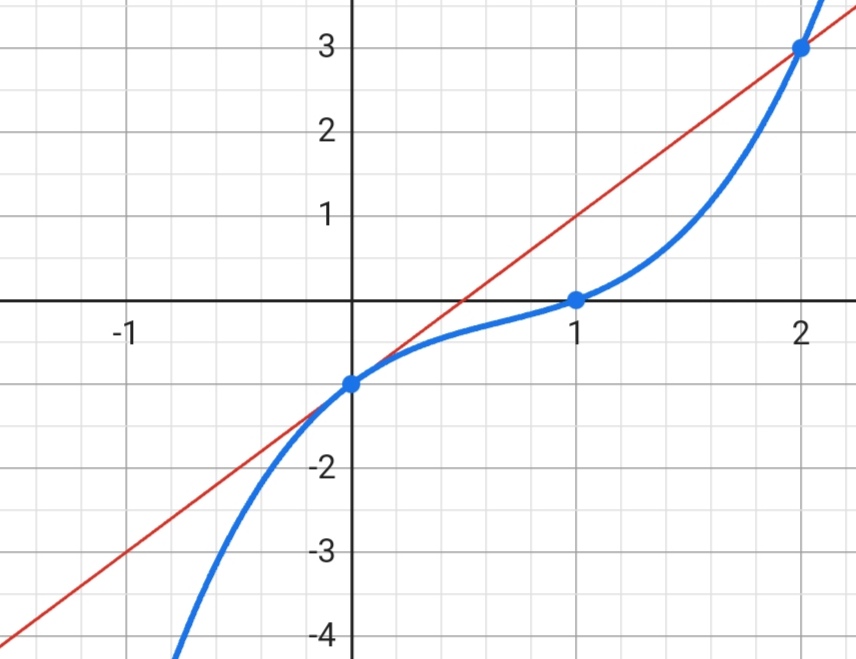}
		\caption{Case 2--graph of $f(u)$}
		\label{fig:case2}
	\end{center}
\end{figure}
Furthermore, let the initial data be
\[
u_0(x)=
\begin{cases}
	2, & \mbox{ for } x<0,\\
	\frac{2}{(1+x)^2}, & \mbox{ for } x\ge 0.
\end{cases}
\]
The numerical simulation (see Figure \ref{fig2-1}) indicates that the solution $u(x,t)$ behaves as expected like a viscous shock wave.
\begin{figure}
	\begin{center}
		\includegraphics[width=7cm]{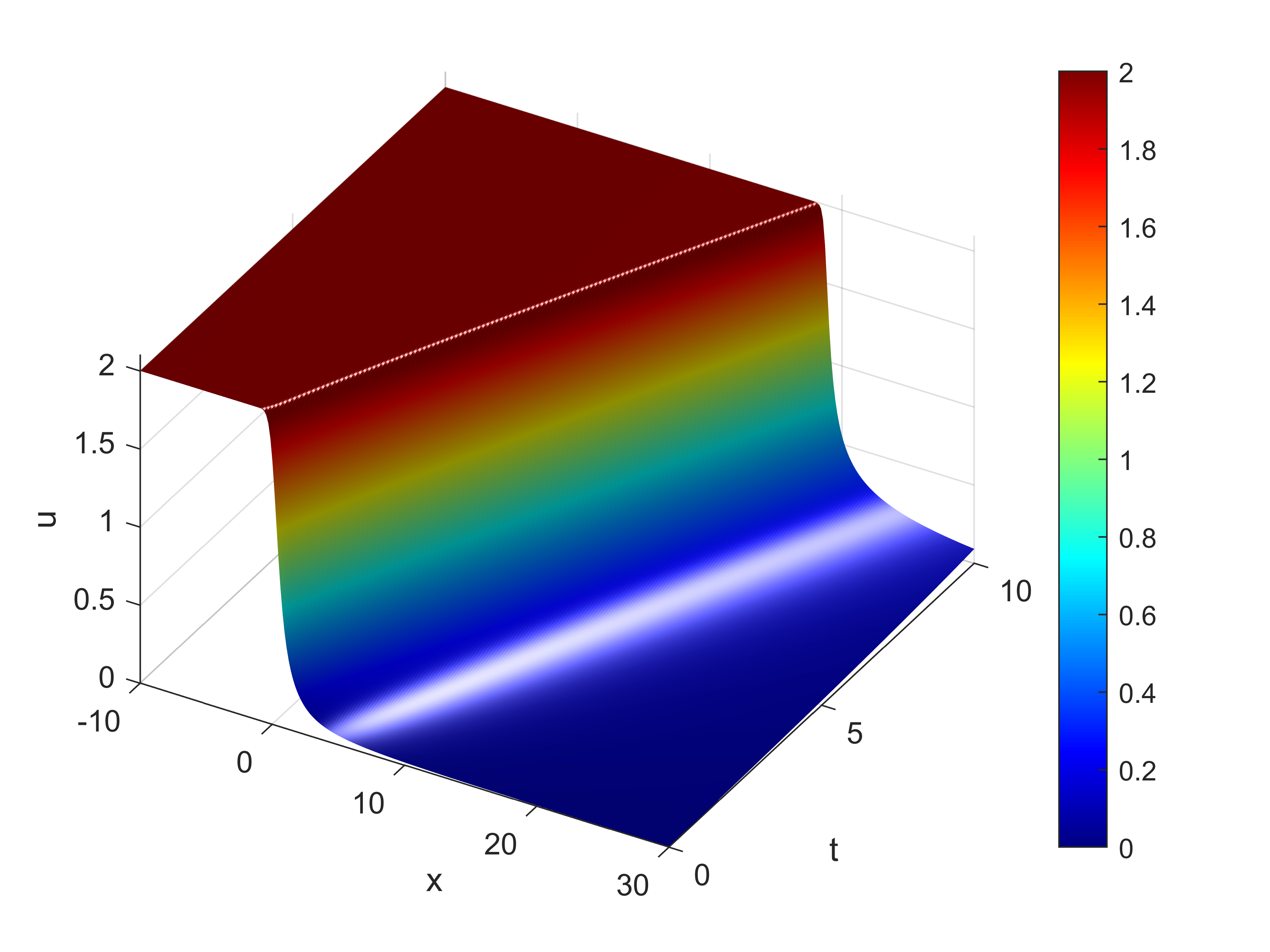}
        \includegraphics[width=7cm]{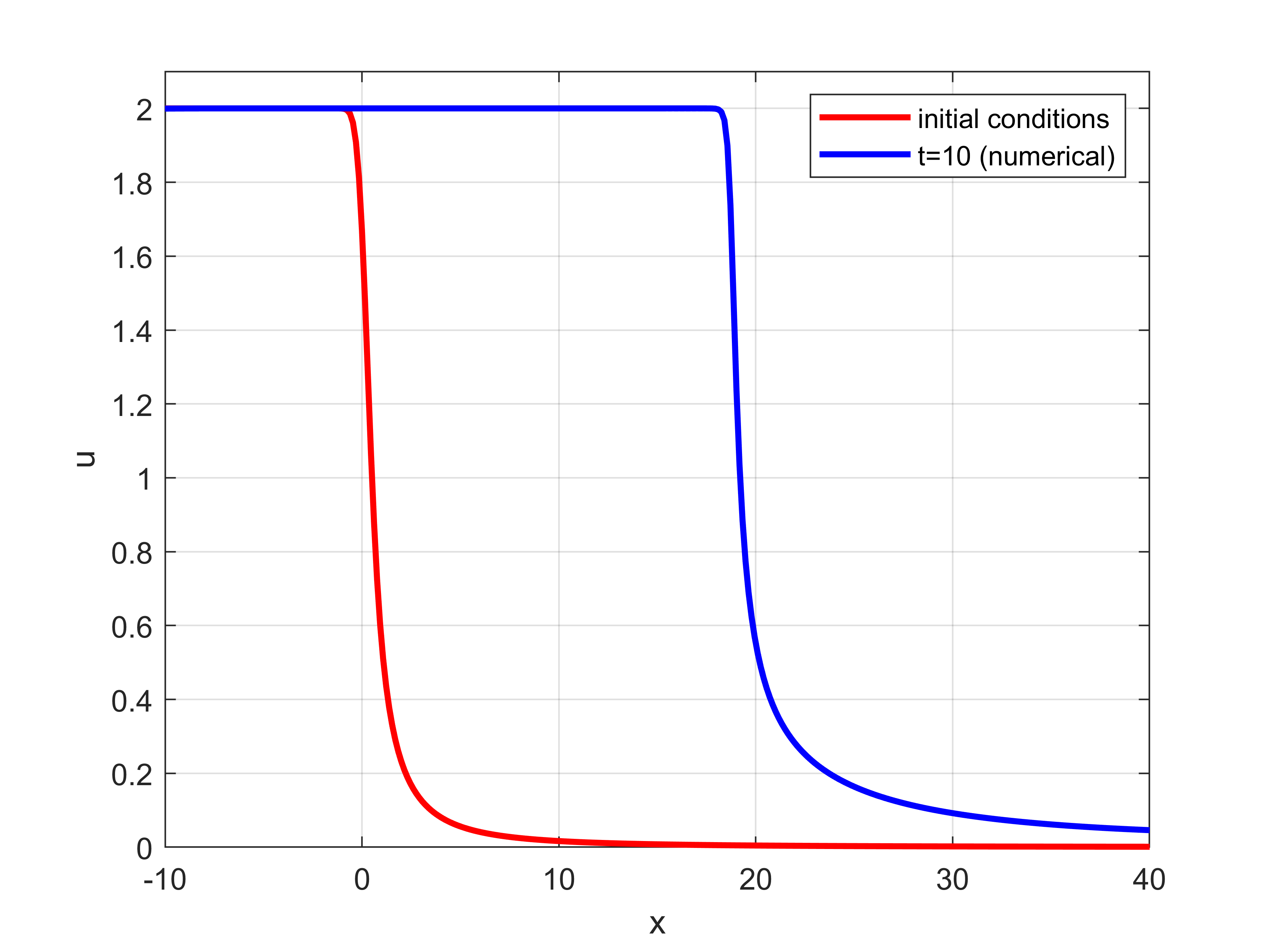}
		\caption{Case 2--the solution  $u(x,t)$ behaves like a monotone viscous shock wave}
		\label{fig2-1}
	\end{center}
\end{figure}

Case 3: $f'(0)<s=f'(u_-)$.
Let $f(u)=-u^3+2u^2-\frac{1}{2}u$, $u_+=0$, $u_-=1$, then $s=\frac{f(u_-)-f(0)}{u_--0}=\frac{1}{2}$,  $f'(0)=-\frac{1}{2}$ and $f'(u_-)=\frac{1}{2}$, such that $f'(0)<s=f'(u_-)$ holds (see Figure \ref{fig:case3}).
\begin{figure}[htbp]
	\begin{center}
		\includegraphics[width=7cm]{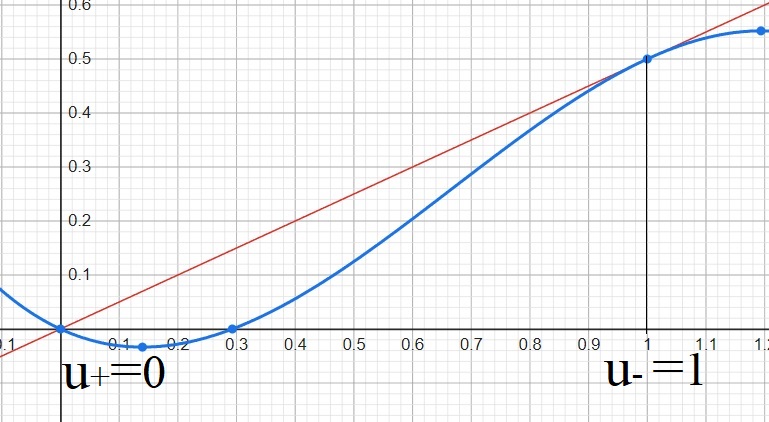}
		\caption{Case 3--the graph of $f(u)$}
		\label{fig:case3}
	\end{center}
\end{figure}
We take the initial data as
\[
u_0(x)=
\begin{cases}
	1-\frac{1}{1+|x|}, & \mbox{ for } x<0,\\
	0, & \mbox{ for } x\ge 0.
\end{cases}
\]
Figure \ref{fig3-1} shows that the solution $u(x,t)$ behaves indeed like a viscous shock wave.
\begin{figure}[htbp]
	\begin{center}
		\includegraphics[width=7cm]{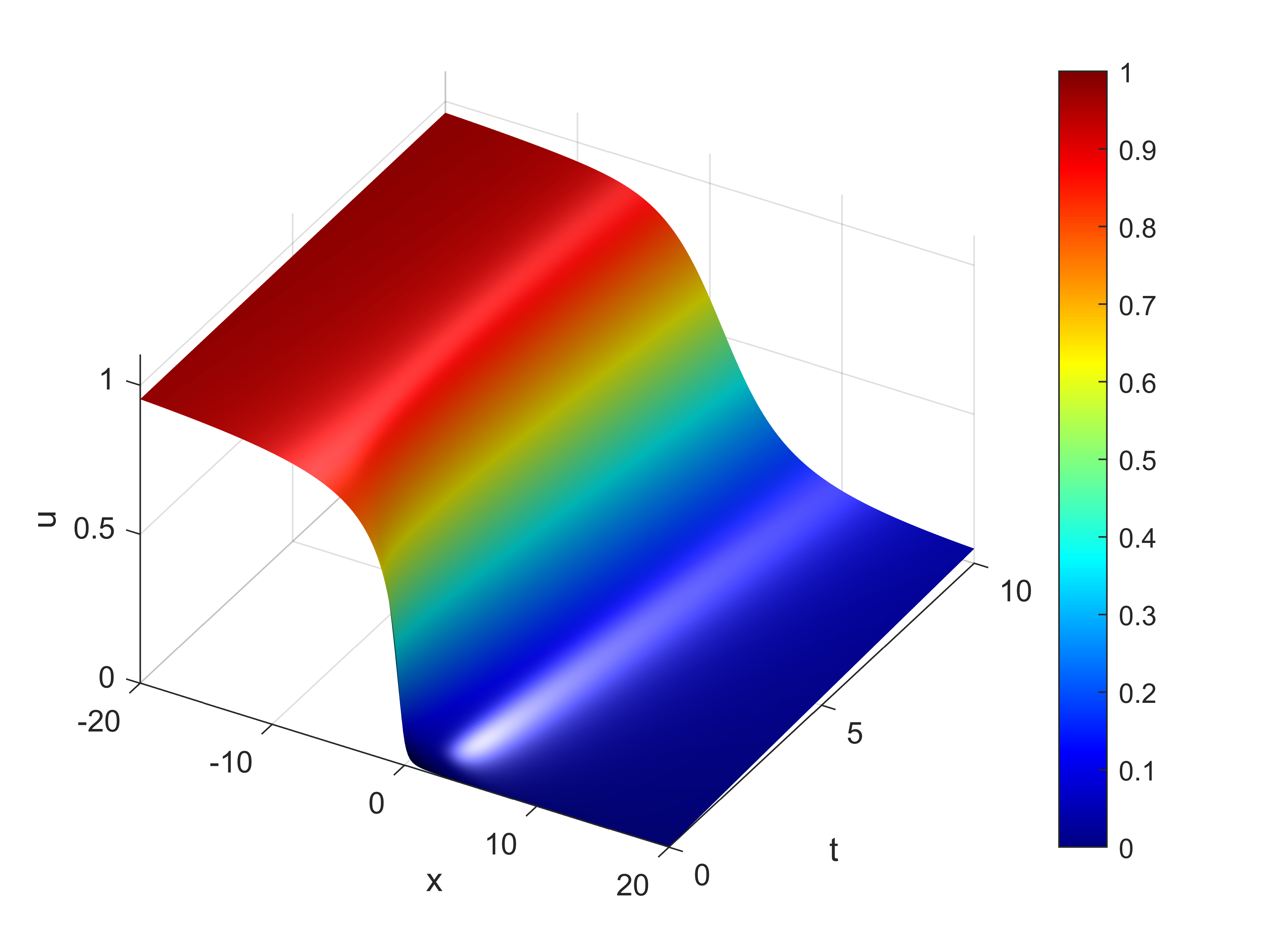}
        \includegraphics[width=7cm]{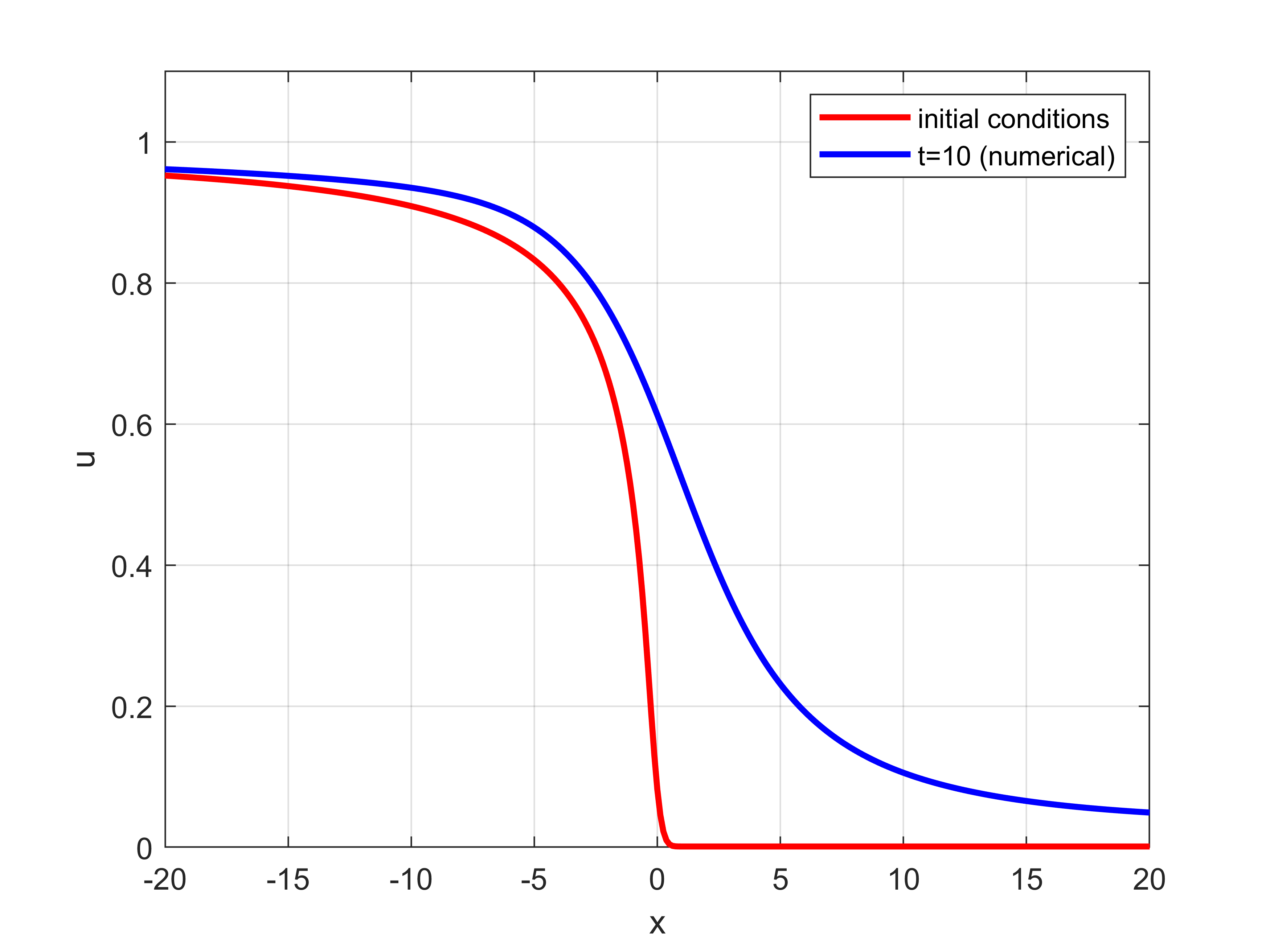}
		\caption{Case 3--the solution  $u(x,t)$ behaves like a monotone viscous shock wave}
		\label{fig3-1}
	\end{center}
\end{figure}

\section*{Acknowledgements} The work was done when X. Li visited McGill University as a PhD trainee supported by
China Scholarship Council (CSC) for the PhD trainee program (202206620041). She would like to express her
sincere thanks for the hospitality of McGill University and CSC. The research of J. Li was supported by the Natural Science Foundation of Jilin Province (No. 20210101144JC) and the National Natural Science Foundation of China (No. 12371216).  The research of M. Mei was supported by NSERC Grant RGPIN-2022-03374. The research of J.-C. Nave was supported by the NSERC Discovery Grant program.

\end{document}